\newcommand{\bb}[1]{\mathbb{#1}}
\newcommand{\cl}[1]{\mathcal{#1}}
\newcommand{\mr}[1]{\mathrm{#1}}
\newcommand{\ra}{\rightarrow}
\newcommand{\map}{\longrightarrow}
\newcommand{\bs}{\backslash}
\newcommand{\ep}{\hfill $\square$} 
\newtheorem{lemma} {Lemma} [section]
\newtheorem{proposition} [lemma] {Proposition}
\newtheorem{theorem} [lemma] {Theorem}
\newtheorem{corollary} [lemma] {Corollary}
\newtheorem{definition}[lemma] {Definition}
\newtheorem{remark}[lemma]{Remark}
\newenvironment{proof}{{\sc Proof:}}{{\hspace*{\fill} $\square$\\}}
\numberwithin{equation}{section}
\title{\bf Bases for Riemann-Roch spaces of one point divisors on an
optimal tower of function fields}
\date{}
\author{Francesco Noseda\thanks{Instituto de Matem\'atica, UFRJ, 
CP 68530, Cidade Universit\'aria, CEP: 21941-909, Rio de Janeiro/RJ - Brasil
(noseda@im.ufrj.br, luciane@im.ufrj.br).}\and 
Gilvan Oliveira\thanks{Departamento de Matem\'atica, CCE/UFES, 
 Goiabeiras, CEP: 29075-910,
Vit\'oria/ES - Brasil (jgilvanol@gmail.com).}\and
Luciane Quoos\footnotemark[1]}
\begin{document}

\maketitle

\begin{abstract}
\noindent
For applications in algebraic geometric codes, an
explicit description of bases of 
 Riemann-Roch spaces of divisors on function fields
 over finite fields is needed.
We give an algorithm to compute
such bases for one point divisors, and Weierstrass semigroups over an optimal tower of function fields.
We also explicitly compute Weierstrass semigroups till level eight.
\end{abstract}

\noindent
MSC2010 Subject Classification Numbers: 14H05, 14H55, 14G50, 94B05.\\
Keywords: Riemann-Roch spaces, tower of function fields, Weierstrass semigroup, algebraic
geometric codes.


\section{Introduction}

Algebraic geometric codes are defined by means of
 Riemann-Roch spaces of divisors on function fields
 over finite fields.
In practice, for applications in coding theory, one needs an
explicit description of bases of such spaces. 
 The problem of computing these bases is a hard one, and it is addressed, for instance, in
 \cite{Maharaj2}, \cite{MH}, \cite{PST}, \cite{Shum} and \cite{VH}. 
In this correspondence we give an algorithm to compute
bases and Weierstrass semigroups over an optimal tower of function fields.

We use the language of function fields as in \cite{Stichtenoth}.
Let $\bb{F}_q $ be the finite field with $q$ elements, 
and consider a tower of function fields $\mathcal{T}=(T_j)_{j \geq 0}$ 
over $\bb{F}_q $. Let $g(T_j)$ denote the genus of $T_j/\bb{F}_q$ and 
$N(T_j)$ its number of $\bb{F}_q $-rational points. 
The well-known Drinfeld-Vl\u{a}dut bound (see $\cite{DV}$) guarantees that the limit
$ \lambda(\mathcal{T}) := \lim_{j \rightarrow \infty} N(T_j)/g(T_j)$
satisfies the inequality:
$$ 0 \leq \lambda(\mathcal{T})  \leq \sqrt q - 1.$$

The tower $\mathcal{T}$ is said to be $\it{good}$ if $\lambda(\mathcal{T})>0$, and 
$\it{optimal}$ if $\lambda(\mathcal{T})=\sqrt{q}-1$.
In the literature there are many explicit descriptions of several good and optimal towers 
(see  \cite{GB}, \cite{GSInv},
\cite{GSExplicit} and \cite{CMSE}),  from which
algebraic geometric codes can be constructed 
having parameters that attain the Tsfasman-Vl\u{a}dut-Zink bound (see \cite{Stichtenoth}). 

In this paper we consider the tower $\mathcal{T}=(T_j)_{j\geq 0}$ 
over the finite field $\bb{F}_{p^2} $ in odd characteristic. 
This tower is defined recursively by $T_0=\bb{F}_{p^2}(x_0)$ and, 
for $j \geq 0, \, T_{j+1}=T_j(x_{j+1})$, where the function $x_{j+1}$ satisfies 
the relation: 
$$x_{j+1}^2=\frac{x_j^2+1}{2x_j}.$$ 
 This tower was introduced and proven to be optimal in \cite{GSTame}. Let
$P_\infty^j$ be the unique pole of the function $x_0$ in $T_j$. 
For each $s \in \mathbb{N}$ fix the divisors $sP_\infty^j$, and let
$D^j=P_1 + \cdots + P_{N}$ be 
the sum of all rational places in $T_j$ that lie over a set $\Omega$ of points in
$T_0$ that completely
split in the tower, see \cite{GSTame}. We get a sequence of algebraic geometric codes defined, 
for $j \geq 0$,  as:
$$\mathcal{C}_{L}(sP_\infty^j, D^j)=
\{ (z(P_1), \cdots , z(P_{N})) \, 
|\, z \in L(sP_\infty^j) \} \subseteq \mathbb{F}_{p^2}^{N}$$
where the Riemann-Roch space $L(sP_\infty^j)$ is defined by:
$$L(sP_\infty^j)=\{ z \in T_j \,|\, \text{the divisor of } z \mbox{ satisfies } 
(z) \geq -sP_\infty^j \}.$$
The knowledge of  explicit bases of the 
Riemann-Roch spaces allows to construct the matrices of such codes. The main 
result of this paper, Theorem \ref{ThBasisLPinf}, is an algorithm to compute such bases.  
The central idea is to apply results of $\cite{Maharaj}$ (see Theorem \ref{ThMaharaj} in
Section \ref{SecPrel} below)
to decompose the vector 
space $L(sP_\infty^j)$ in $T_j$ as a direct sum of Riemann-Roch spaces of divisors 
at the lower level $T_{j-1}$, and continue this way till the rational function field
$T_0$, where the bases can be easily computed. In order this process to be performed, 
the divisors we get at each level $k<j$ should be invariant for the action of the Galois
group of $T_k/T_{k-1}$. Unfortunately this condition is not always satisfied and
the procedure has to be suitably modified, as done in Sections
\ref{SubsecBasisLPinfxjres1} and \ref{SubsecII}. 
 
As a consequence of the main result we get an algorithm to compute the 
Weierstrass semigroups:
$$H(P_\infty^j)=\{ s \in \mathbb{N} \, |\, \exists \,\, z \in T_j 
\text{ s.t. the pole divisor of } z \mbox{ satisfies } 
(z)_\infty = sP_\infty^j  \}$$
at the totally ramified points $P_\infty^j$, Theorem \ref{ThWeierLPinf}.
As an application, we also explicitly present the semigroups till level eight,
Section \ref{SecSemigroup}.

The ramification structure of the tower and
the computation of the genus is presented in Section \ref{SecPrel}.  \\

\noindent
\textbf{Acknowledgments.} The authors would like to express their gratitude to A. Garcia
for suggesting the subject and for valuable discussions, and to 
P. Zingano for useful help in the computational part.

\section{Preliminaries}\label{SecPrel}

The object of study is an asymptotically optimal tower of functions fields
defined over the finite field $K=\bb{F}_{p^2}$ 
with $p^2$ elements, where $p$ is an odd prime. This tower  is recursively
defined by: $T_0=K(x_0)$ and, 
for $j \geq 0, \, T_{j+1}=T_j(x_{j+1})$, where the function $x_{j+1}$ satisfies 
the relation: 
$$x_{j+1}^2=\frac{x_j^2+1}{2x_j}.$$
 The field $K$ is the full constant field of $T_j/K$ for any $j\geq 0$. Moreover,
 the extension $T_{j+1}/T_j$ is a Kummer extension of degree $2$. 
For the general theory of function fields we refer
to \cite{Stichtenoth}.

\vspace{5mm}

 For $\alpha\in K\cup \{\infty\}$ we denote by
$ P^0_\alpha$
the unique place of $T_0/K$ such that $x_0(P^0_\alpha)=\alpha$.
We start by studying the ramification structure of the tower above the places $P^0_\alpha$
with $\alpha\in\{\infty,0,\pm i,\pm 1\}$, where $i^2=-1$; this is the content of
Lemma \ref{LemmaRamification}. We shall consider the results of the
lemma, and its consequences, to be known.
Nonetheless, we are not aware of a complete written account of the
ramification structure, and the results we need are beyond what is explicitly written
about it in \cite{GSTame}, \cite{GSExplicit}, and \cite{Stichtenoth}.

\begin{lemma} \label{RemRamification}
{\itshape
For all $j\geq 0$, let $P$ be a place of $T_j/K$ and $Q$ be
a place of $T_{j+1}/K$ that lies over $P$. Then: \\

\noindent
(a) We have:
$$ e(Q|P)=\left\{
\begin{array}{ll}
2&\quad\mbox{if}\quad v_P\left(\frac{x_j^2+1}{2x_j}\right)\quad\mbox{is odd}\\\noalign{\medskip}
1&\quad\mbox{if}\quad v_P\left(\frac{x_j^2+1}{2x_j}\right)\quad\mbox{is even}.
\end{array}\right.$$

\noindent
(b) We have:
$$\begin{array}{lcl}
x_{j+1}(Q)=\infty &\qquad\iff\qquad& x_{j}(P)\in\{\infty,0\}
\\\noalign{\medskip}
x_{j+1}(Q)\in\{1,-1\} &\qquad\iff\qquad &x_{j}(P)=1.
\\\noalign{\medskip}
x_{j+1}(Q)=0 &\qquad\iff\qquad& x_{j}(P)\in\{i,-i\}
\\\noalign{\medskip}
x_{j+1}(Q)\in\{i,-i\} &\qquad\iff\qquad &x_{j}(P)=-1.
\end{array}$$

\noindent
(c) If $x_{j}(P)\in\{1,-1\}$ then $e(Q|P)=f(Q|P)=1$.
In both cases
there are exactly two places in $T_{j+1}/K$ over $P$, and $x_{j+1}$
takes one of the values admitted by item $(b)$ in one place, and the other
value in the other place.\\
}
\end{lemma}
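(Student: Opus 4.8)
The plan is to set $u=\frac{x_j^2+1}{2x_j}$ and to exploit throughout that $T_{j+1}=T_j(x_{j+1})$ is the degree-$2$ Kummer extension defined by $x_{j+1}^2=u$. Part (a) is then immediate from the standard ramification theory of Kummer extensions (the Kummer-extension proposition in \cite{Stichtenoth}): for a degree-$2$ Kummer extension one has $e(Q|P)=2/\gcd(2,v_P(u))$, which equals $2$ exactly when $v_P(u)$ is odd and $1$ when it is even. No real work is needed here beyond checking the hypotheses of that proposition.

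For part (b) I would argue by directly evaluating $u$ at $P$. On $K\cup\{\infty\}$ the rational map $\alpha\mapsto\frac{\alpha^2+1}{2\alpha}$ sends $1\mapsto 1$, $-1\mapsto -1$, $\pm i\mapsto 0$, and $0,\infty\mapsto\infty$, the last two cases by a short valuation count showing $v_P(u)=v_P(x_j)<0$ when $x_j(P)=\infty$ and $v_P(u)=-v_P(x_j)<0$ when $x_j(P)=0$. Since $x_{j+1}(Q)^2=u(P)$, the value $x_{j+1}(Q)$ is a square root of $u(P)$ (or $\infty$ when $u$ has a pole at $P$), which yields the four forward implications. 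Because the four output sets $\{1,-1\}$, $\{i,-i\}$, $\{0\}$, $\{\infty\}$ are pairwise disjoint, each implication inverts to the stated biconditional; for instance $x_{j+1}(Q)=0$ forces $u(P)=0$, hence $x_j(P)^2=-1$ with $x_j(P)$ finite and nonzero, i.e. $x_j(P)\in\{i,-i\}$.

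For part (c), suppose $x_j(P)\in\{1,-1\}$. Then $v_P(x_j)=0$ and $v_P(x_j^2+1)=0$, so $v_P(u)=0$ is even and $e(Q|P)=1$ by part (a). The residue of $x_{j+1}$ at any $Q$ over $P$ is a root of $Y^2-u(P)$, which is $Y^2-1$ or $Y^2+1$ according as $x_j(P)=1$ or $x_j(P)=-1$; its roots are $\pm 1$ respectively $\pm i$, all of which lie in $K$ and hence in the residue field of $P$ (note $i\in\mathbb{F}_{p^2}$ since $4\mid p^2-1$ for odd $p$). Therefore the residue field of $Q$ coincides with that of $P$ and $f(Q|P)=1$. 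The fundamental equality $\sum_{Q\mid P}e(Q|P)f(Q|P)=2$ then forces exactly two places over $P$. Finally, since $p$ is odd the polynomial $Y^2-u(P)$ has two distinct roots in $K$ and $P$ does not divide its discriminant $4u$, so Kummer's (Dedekind's) theorem shows $P$ splits according to this factorization, with $x_{j+1}$ reducing to the two different values at the two places, as claimed.

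The main obstacle is part (c): guaranteeing both $f(Q|P)=1$ and that the two places carry \emph{distinct} values of $x_{j+1}$, for a place $P$ that need not be rational. The observations that make this go through are that $\mathbb{F}_{p^2}$ already contains all of $\pm 1$ and $\pm i$, so the residues of $x_{j+1}$ remain in the base residue field and the relevant quadratic splits over $K$, and that $p$ being odd keeps these two roots distinct and the discriminant a unit at $P$.
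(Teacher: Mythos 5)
Your proof is correct and takes essentially the same route as the paper, which disposes of this lemma in a single line by citing the theory of Kummer extensions in \cite{Stichtenoth}; your write-up simply supplies the details the paper leaves implicit (the ramification formula $e(Q|P)=2/\gcd(2,v_P(u))$ for (a), residue computations for the map $\alpha\mapsto(\alpha^2+1)/2\alpha$ for (b), and Kummer's theorem plus the fundamental identity for (c)). One small caution: in (c), the fact that the residue of $x_{j+1}$ lies in the residue field of $P$ does not by itself yield $f(Q|P)=1$, since the residue field of $Q$ need not be generated by $\bar{x}_{j+1}$ — but your subsequent appeal to Kummer's theorem, whose squarefree reduction $Y^2-u(P)$ splits into two distinct linear factors over $K\subseteq F_P$, already gives exactly two places with $e=f=1$ and distinct values of $x_{j+1}$, so that intermediate claim is dispensable.
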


\begin{proof} It follows from the theory of Kummer extensions of function fields, see
\cite{Stichtenoth}.

\end{proof}

\vspace{0mm}

\begin{lemma}\label{LemmaRamification}
{\itshape Denote $\cl{R}:=\{P^0_\infty, P^0_0, P^0_i, P^0_{-i}, P^0_1,P^0_{-1}\}$.
Then, for any $j\geq 0$ the following holds. \\

\noindent
(a) For $\alpha\in\{\infty,0,\pm i\}$ there is a unique place $P^j_\alpha$
in
$T_j/K$ over $P^0_\alpha$. If $j\geq 1$ then
$e(P^{j}_\alpha|P^{j-1}_\alpha)=2$.
Moreover $v_{P^j_\infty}(x_j)=-1$ and:
$$ v_{P^j_0}(x_j)=\left\{
\begin{array}{ll}
1&\quad\mbox{if}\quad j=0\\\noalign{\medskip}
-1&\quad\mbox{if}\quad j\geq 1
\end{array}\right.\qquad\qquad
v_{P^j_{\pm i}}(x_j)=\left\{
\begin{array}{ll}
0&\quad\mbox{if}\quad j=0\\\noalign{\medskip}
1&\quad\mbox{if}\quad j= 1 \\\noalign{\medskip}
-1&\quad\mbox{if}\quad j\geq 2.
\end{array}\right.$$

\noindent
(b) For $\beta\in\{\pm 1\}$ there is a unique place $P^j_\beta$
in $T_j/K$ with $x_j(P^j_\beta)=\beta$. If $j\geq 1$, we have:
$P^j_{\pm 1}|P^{j-1}_1$ and
$e(P^j_{\pm 1}|P^{j-1}_1)=1$.  \\

\noindent
(c) Let $r$ with $0\leq r\leq \lfloor(j-3)/2 \rfloor$; notice
that $j\geq 3$ and $r<j-1$. Denote by
$Q=Q^j_r$ a place
in $T_j/K$
that lie over $P^r_{-1}$. Then the sum of the degrees of such places is
$2^{r+2}$. Moreover, $
e(Q^{j}_r|Q^{j-1}_r)=2$ and $v_Q(x_j)=-1$.\\

\noindent
(d) Let $r$ with $\lfloor(j-1)/2 \rfloor \leq r\leq  j-2$;
notice that $j\geq 2$ and $r\geq 0$. Denote by
$Q=Q^j_r$ a place
in $T_j/K$
that lie over $P^r_{-1}$. Then the sum of the degrees of such places is
$2^{j-r}$. Moreover,
we have $e(Q^{j}_r|Q^{j-1}_r)=1$ and:
$$v_Q(x_j)=\left\{
\begin{array}{ll}
 -2^{2r-j+2} &\quad\mbox{if}\quad r\leq j-3\\\noalign{\medskip}
 2^{j-2} &\quad\mbox{if}\quad r= j-2.
\end{array}\right.$$

\noindent
(e) For $\beta\in\{\pm i\}$ there is a unique place $Q^j_\beta$
in $T_j/K$ with $x_j(Q^j_\beta)=\beta$. If $j\geq 1$, we have:
$Q^j_{\pm i}|P^{j-1}_{-1}$ and
$e(Q^j_{\pm i}|P^{j-1}_{-1})=1$, and $Q^j_{\pm i}$
are the unique places of $T_j/K$ that lie over $P^{j-1}_{-1}$.\\

\noindent
(f) The places of $T_j/K$ defined in items $(a)$ to $(e)$ are the unique places
over $\cl{R}$. With the exception of $Q^0_{\pm i}=P^0_{\pm i}$, they are all distinct.\\

\noindent
(g) The function $x_j$ has no zeros nor poles in $T_j$ outside the set of places over
$\cl{R}$.\\

\noindent
(h) The places $Q^j_{\pm i}$ defined in item $(e)$ satisfy $v_{Q^j_{\pm i}}(x_j^2+1)=2^j$.
}
\end{lemma}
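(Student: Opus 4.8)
The plan is to argue by induction on $j$, pushing information from level $j$ to level $j+1$ by means of Lemma \ref{RemRamification} together with the algebraic identities coming from the defining relation. Besides $x_{j+1}^2=\frac{x_j^2+1}{2x_j}$ one has the two factorizations
$$x_{j+1}^2-1=\frac{(x_j-1)^2}{2x_j},\qquad x_{j+1}^2+1=\frac{(x_j+1)^2}{2x_j},$$
which make the valuations tractable: at a place where $x_j$ is a unit, a zero of $x_j-1$ (resp. $x_j+1$) of order $m$ forces a zero of $x_{j+1}^2-1$ (resp. $x_{j+1}^2+1$) of order $2m$. The six special values are permuted by Lemma \ref{RemRamification}(b) according to $\infty,0\mapsto\infty$, $\pm i\mapsto 0$, $1\mapsto\{1,-1\}$, $-1\mapsto\{\pm i\}$, and this dynamics splits the argument into three parts: the stable chain over $\{\infty,0,\pm i\}$ (item (a)), the unramified splitting at the places carrying $x=\pm1$ (items (b) and (e)), and the branches issuing from each $P^r_{-1}$ (items (c), (d), (h)).

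For (a) I would verify at $P^0_\alpha$ that $v\!\left(\frac{x_0^2+1}{2x_0}\right)$ is odd for each $\alpha\in\{\infty,0,\pm i\}$, using in the last two cases that $x_0^2+1$ has simple zeros at $x_0=\pm i$; Lemma \ref{RemRamification}(a) then gives $e=2$ and hence a unique place above, and the valuations of $x_j$ drop out of $2\,v_{P^j_\alpha}(x_j)=2\,v_{P^{j-1}_\alpha}\!\left(\frac{x_{j-1}^2+1}{2x_{j-1}}\right)$ by iterating. Items (b) and (e) are read off directly from Lemma \ref{RemRamification}(c): at any place with $x=\pm1$ the degree-two extension splits into two unramified places, which carry the values $\{1,-1\}$ (naming $P^j_{\pm1}$) resp. $\{\pm i\}$ (naming $Q^j_{\pm i}$).

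The core is the control of the two sequences $b_j:=v_{P^j_1}(x_j-1)$ and $a_j:=v_{P^j_{-1}}(x_j+1)$. Applying $x_j^2-1=\frac{(x_{j-1}-1)^2}{2x_{j-1}}$ at $P^j_1$, where $e(P^j_1|P^{j-1}_1)=1$ and both $x_{j-1},x_j$ are units, gives $b_j=2b_{j-1}$, so $b_j=2^j$ from $b_0=1$; the same identity at $P^j_{-1}$ over $P^{j-1}_1$ gives $a_j=2b_{j-1}=2^j$. Item (h) is then immediate from $x_j^2+1=\frac{(x_{j-1}+1)^2}{2x_{j-1}}$, since $v_{Q^j_{\pm i}}(x_j^2+1)=2\,v_{P^{j-1}_{-1}}(x_{j-1}+1)=2a_{j-1}=2^j$. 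I would then climb the branch over $P^r_{-1}$, the induction showing that all places above $P^r_{-1}$ share, level by level, the same value and valuation of $x$: level $r+1$ is $Q^{r+1}_{\pm i}$ with $v(x_{r+1}^2+1)=2^{r+1}$; level $r+2$ carries $x=0$ with $v(x_{r+2})=2^r$; and every higher level is a pole of $x$. In this pole regime $v\!\left(\frac{x^2+1}{2x}\right)=v(x)$, so Lemma \ref{RemRamification}(a) ramifies precisely when $v(x)$ is odd, whence $v(x)$ halves while even and freezes at $-1$. This yields $v_{Q^j_r}(x_j)=-2^{2r-j+2}$ for $r+3\le j\le 2r+2$ and $v_{Q^j_r}(x_j)=-1$ for $j\ge 2r+3$, which together with the value $2^r=2^{j-2}$ at $j=r+2$ reproduces the two cases of (d) and the value in (c).

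The degree statements follow from $\sum_{Q\mid P^r_{-1}}e(Q|P^r_{-1})\,f(Q|P^r_{-1})=[T_j:T_r]=2^{j-r}$ together with $\deg P^r_{-1}=1$. The parity analysis shows that every step from $P^r_{-1}$ up to level $2r+2$ is unramified, so there $\sum f=2^{j-r}$ and the places over $P^r_{-1}$ have total degree $2^{j-r}$, which is (d); from level $2r+2$ onward each step is totally ramified, so the total degree is frozen at its level-$(2r+2)$ value $2^{r+2}$, which is (c). For (g) one traces a zero or pole of $x_j$ downward: by the shape of $x_j^2=\frac{x_{j-1}^2+1}{2x_{j-1}}$ it forces $x_{j-1}\in\{0,\infty,\pm i\}$, hence lies over $\cl{R}$ by induction; and (f) is the remark that the places listed in (a)--(e) either lie over distinct places of $T_0$ or are separated by their $x_j$-values. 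The step I expect to absorb most of the work is the pole-regime bookkeeping behind (c) and (d): one must check at every level that the parity of $v(x)$ produced by the halving is exactly the parity that Lemma \ref{RemRamification}(a) needs in order to ramify or not, and that the halving reaches $-1$ precisely at $j=2r+2$, so that the index ranges in (c) and (d) meet without gap or overlap.
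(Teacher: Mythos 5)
Your proposal is correct, and its skeleton coincides with the paper's: induction on $j$, pushing ramification data up one level at a time via Lemma \ref{RemRamification}, with the same trichotomy (the totally ramified chain over $\{\infty,0,\pm i\}$, the split places at $x=\pm 1$, and the branches over each $P^r_{-1}$). Where you genuinely diverge is item $(h)$. The paper proves $(h)$ globally: it assembles the complete pole divisor of $x_{j+1}$ from items $(a)$--$(g)$, computes $\deg(x_{j+1})_\infty=1+\sum_{n=0}^{j}2^n=2^{j+1}$, invokes $\deg(x_{j+1}^2+1)=0$, and uses the Galois symmetry swapping $Q^{j+1}_{i}$ and $Q^{j+1}_{-i}$ to split the degree evenly. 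You instead exploit the factorizations $x_{j+1}^2-1=(x_j-1)^2/(2x_j)$ and $x_{j+1}^2+1=(x_j+1)^2/(2x_j)$ (which the paper never writes down) to track the local invariants $b_j=v_{P^j_1}(x_j-1)=2^j$ and $a_j=v_{P^j_{-1}}(x_j+1)=2^j$, from which $(h)$ drops out in one line as $v_{Q^j_{\pm i}}(x_j^2+1)=2a_{j-1}=2^j$. This is a purely local argument, independent of the global bookkeeping of poles, and as a bonus it yields directly the coefficient $2^j$ of $P^j_{-1}$ in the divisor $(1+x_j)$, which the paper only extracts afterwards in Proposition \ref{PropDivisorxj} from the same degree count. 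A second, milder difference: for the degree claims in $(c)$ and $(d)$ you use the fundamental identity $\sum_{Q\mid P^r_{-1}}e(Q|P^r_{-1})f(Q|P^r_{-1})=2^{j-r}$ once, combined with your parity analysis showing the branch is unramified up to level $2r+2$ and totally ramified thereafter, whereas the paper threads the sums of degrees through the induction case by case; your packaging is cleaner, the content equivalent. The one place where your write-up is terser than it should be is $(f)$: you address distinctness of the listed places, but the completeness claim (that they exhaust the places over $\cl{R}$) deserves the explicit closing remark the paper makes, namely that any place of $T_{j+1}$ over $\cl{R}$ lies over a listed place of $T_j$, and that the value dynamics of Lemma \ref{RemRamification}$(b)$ shows every place above a listed place is again of one of the types $(a)$--$(e)$; your opening paragraph contains exactly this dynamics, so the gap is expository rather than mathematical.
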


\begin{proof} The proof is by induction on $j$. For $j=0$ the thesis follows by the properties
of the rational function field. 
Let $j\geq 0$, and assume the thesis is true for $j$.\\

(a) Use Lemma \ref{RemRamification} to deduce $v_{P^{j+1}_{\infty,0}}(x_{j+1})=-1$
and $e(P^{j+1}_{\infty,0}|P^{j}_{\infty,0})=2$ for $j\geq 0$, and to deduce
$v_{P^{j+1}_{\pm i}}(x_{j+1})=-1$
and $e(P^{j+1}_{\pm i}|P^{j}_{\pm i})=2$ for $j\geq 1$. For $j=0$ the assertions
$v_{P^{1}_{\pm i}}(x_{1})=1$
and $e(P^{1}_{\pm i}|P^{0}_{\pm i})=2$ follows by
$v_{P^0_{\pm i}}(x_0^2+1)=1$ and by the same Lemma.
 \\

(b) It follows by  Lemma \ref{RemRamification}.\\

(c) Let $r$ be such that $0\leq r\leq \lfloor (j-2)/2\rfloor$, which implies
$j\geq 2$ and $r<j$. Let $Q$ be a place in $T_{j+1}/K$ that lies
over $P_{-1}^r$. The image $P$ of $Q$ in $T_j/K$ lies over $P_{-1}^r$ as
well. Applying
Lemma \ref{RemRamification}, the thesis follows 
if we prove the claim that: $v_P(x_j)=-1$, and the sum
of the degrees of such places $P$ is $2^{r+2}$.

First, if $r\leq \lfloor (j-3)/2\rfloor$, then the claim follows by induction
hypothesis $(c)$.

Second, if $r> \lfloor (j-3)/2\rfloor$ then
$j$ is even and $r=(j-2)/2$, i.e., $j=2r+2$. Notice that $r
=\lfloor (j-1)/2\rfloor\leq j-2$. Then the result follows
by induction hypothesis $(d)$.\\

\noindent
(d) Let $r$ with $\lfloor j/2 \rfloor \leq r\leq  j-1$,
which implies $j\geq 1$ and $r\geq 0$. Let $Q$ be a place in $T_{j+1}/K$ that lies
over $P_{-1}^r$. The image $P$ of $Q$ in $T_j/K$ lies over $P_{-1}^r$ as
well.

First case: $r\leq j-3$. Then $\lfloor(j-1)/2 \rfloor \leq r\leq  j-2$
and we can apply induction $(d)$ to deduce $v_P(x_j)= -2^{2r-j+2}$,
and the sum of the degrees of such places $P$ is $2^{j-r}$.
Since in fact $(j-1)/2\leq r$, then $2r-j+2\geq 1$ and $v_P(x_j)$ is even.
Apply  Lemma \ref{RemRamification}
to deduce $e(Q|P)=1$ and $v_Q(x_{j+1})=-2^{2r-(j+1)+2}$. The claim
on the sum of the degrees follows.

Second case: $r= j-2$. Then $\lfloor(j-1)/2 \rfloor \leq r\leq  j-2$
and we can apply induction $(d)$ to deduce $v_P(x_j)= 2^{j-2}$,
and the sum of the degrees of such places $P$ is $2^{j-r}$.
Since $\lfloor j/2 \rfloor\leq   j-2$, then $j\geq 3$.
Hence $v_P(x_j)$ is even.
Apply Lemma \ref{RemRamification}
to deduce $e(Q|P)=1$ and $v_Q(x_{j+1})=-2^{j-3}=-2^{2r-(j+1)+2}$.
The claim
on the sum of the degrees follows.

Third case: $r=j-1$. In this case,
by induction $(e)$ and $(h)$,
 the place $P$ is one of the places $Q^{j}_{\pm i}$,
and we have: $v_Q(x^2_j+1)=2^j$ and $v_Q(x_j)=0$. By 
Lemma \ref{RemRamification} it follows that $e(Q|P)=1$ and
$v_Q(x_{j+1})=
2^{j-1}=2^{(j+1)-2}$. The result
on the sum of the degrees follows.\\

\noindent
(e) By  Lemma \ref{RemRamification} and
induction $(b)$.\\

\noindent
(f) Any place $Q$ of $T_{j+1}/K$ that lies over $\cl{R}$ lies over a place $P$
of $T_j/K$
that lies over $\cl{R}$. By induction $(f)$, any such a place $P$ is one of
the places of $T_j/K$ defined by items $(a)$ to $(e)$.
The thesis follows by observing that if a place of $T_{j+1}/K$ lies above a place of
$T_j/K$
defined by $(a)$ to $(e)$, then it is itself defined by $(a)$ to $(e)$.\\

\noindent
(g) We have to show that if $Q$ is a place of $T_{j+1}$ that does not lie
over $\cl{R}$ then $v_Q(x_{j+1})=0$. Let $P$ the image of
$Q$ in $T_j$. Since $P$ does not lie over $\cl{R}$, by induction $(g)$ we have
$v_P(x_j)=0$. By the relation between $x_{j+1}$ and $x_j$ it follows that
we have to show that $v_P(x^2_j+1)=0$. Necessarily $v_P(x_j^2+1)\geq 0$. If
it happened that $v_P(x^2_j+1)> 0$ then $x_j(P)=\pm i$ and by induction
$(e)$ the place $P$ would lie over $\cl{R}$.\\

\noindent
(h)
Let's analyse zeros and poles of $x^2_{j+1}+1$ in $T_{j+1}/K$.
The unique zeros are $Q^{j+1}_{\pm i}$, by what was already proven for $(e)$.
Since the two places
$Q^{j+1}_{\pm i}$ are permuted by the action of the unique non-trivial
automorphism of $T_{j+1}/T_j$, then the valuations of $x^2_{j+1}+1$ at these
two places are equal, say equal to $v$. We have to show that $v=2^{j+1}$.
By $(g)$ and $(a)$ to
$(e)$ the poles of $x_{j+1}$ are the following: $P^{j+1}_\infty$, $P^{j+1}_0$ (order $1$);
$P^{j+1}_{\pm i}$ if $j\geq 1$ (order 1);
the places $Q^{j+1}_r$ for $r$ with $0\leq r\leq j-2$
(order $1$ if $r\leq \lfloor (j-2)/2 \rfloor$, the sum of the degrees of
such places is $2^{r+2}$; order
$2^{2r-j+1}$ if $r>\lfloor (j-2)/2 \rfloor$, the sum of the degrees of
such places is $2^{j+1-r}$). We can
compute:
$$ \mr{deg}(x_{j+1})_\infty= 1+\sum_{n=0}^j 2^n=2^{j+1}.$$
Then $ 0=\mr{deg}(x_{j+1}^2+1)=
2v-2\mr{deg}(x_{j+1})_\infty$, and we conclude $v=2^{j+1}$.
\end{proof}

\begin{definition}
For any $j\geq 0$ and for any $r$ with $0\leq r\leq j$,
denote by:
$$D^j_r:= \sum_{Q^j|P^r_{-1}} Q^j$$
the divisor given by the sum of the places in $T_j$ that lie over $P^r_{-1}$.
Notice that $D^j_j=P_{-1}^j$.
Extend the definition of $D^j_r$ for $r=-2,-1$ by 
$D_{-2}^j:=P^j_0$ and $D_{-1}^j:=P^j_i+P^j_{-i}.$

\end{definition}

The following corollary is a consequence of Lemma \ref{LemmaRamification}.

\begin{corollary}\label{PropRamification}
{\itshape
For any $j\geq 0$ and any $r$  with $-2\leq r\leq j$ we have:
$$\mr{deg}(D^j_r)=\left\{
\begin{array}{ll}
 2^{j-r} & \mbox{ if } \quad j\leq 2r+2\\\noalign{\medskip}
  2^{r+2} & \mbox{ if }\quad j\geq 2r+2.
\end{array}
\right.$$
Moreover, if $j\geq 1$ then, for any $r$ with $-2\leq r\leq j$,
 the places in $D^j_r$ ramifies in $T_j/T_{j-1}$ if
and only if $j\geq 2r+3$.\\
}
\end{corollary}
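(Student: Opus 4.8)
The plan is to read the statement directly off Lemma~\ref{LemmaRamification}, since by the Definition preceding the corollary one has $\mr{deg}(D^j_r)=\sum_{Q^j|P^r_{-1}}\mr{deg}(Q^j)$, so $\mr{deg}(D^j_r)$ is exactly the ``sum of degrees'' already computed there. The corollary is thus a repackaging of parts (a)--(e) of that lemma into a single closed formula, organized by the size of $r$ relative to $j$. The only genuine bookkeeping is to check that the ranges of $r$ appearing in items (c) and (d) partition $\{0,1,\dots,j\}$, and to treat the extreme values $r\in\{-2,-1,j-1,j\}$ separately.

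For the degree formula I would split into cases according to $r$. For the extended values: by the Definition $D^j_{-2}=P^j_0$ and $D^j_{-1}=P^j_i+P^j_{-i}$, and since $P^j_0,P^j_{\pm i}$ are the unique places over the rational places $P^0_0,P^0_{\pm i}$ (Lemma~\ref{LemmaRamification}(a)) they are rational, giving $\mr{deg}(D^j_{-2})=1=2^{(-2)+2}$ and $\mr{deg}(D^j_{-1})=2=2^{(-1)+2}$, matching $2^{r+2}$. For $0\le r\le j$: the value $r=j$ gives $D^j_j=P^j_{-1}$, rational by item (b), so $\mr{deg}=1=2^{j-r}$; the value $r=j-1$ is handled by item (e), which identifies the places over $P^{j-1}_{-1}$ as the two rational places $Q^j_{\pm i}$, so $\mr{deg}=2=2^{j-r}$; for $\lfloor(j-1)/2\rfloor\le r\le j-2$ item (d) gives $\mr{deg}(D^j_r)=2^{j-r}$ directly; and for $0\le r\le\lfloor(j-3)/2\rfloor$ item (c) gives $\mr{deg}(D^j_r)=2^{r+2}$. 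It then remains to match these ranges with the dichotomy of the statement, using the elementary equivalences $r\ge\lfloor(j-1)/2\rfloor\iff j\le 2r+2$ and $r\le\lfloor(j-3)/2\rfloor\iff j\ge 2r+3$ (both verified by parity of $j$); this shows the $2^{j-r}$ branch corresponds to items (d), (e) and $r=j$, while the $2^{r+2}$ branch corresponds to item (c).

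For the ramification statement, fixing $j\ge 1$, I would run through the same cases and read off the ramification index $e$ from the lemma. Item (c) gives $e(Q^j_r|Q^{j-1}_r)=2$ precisely on $j\ge 2r+3$; items (d) and (e) give $e=1$ on $\lfloor(j-1)/2\rfloor\le r\le j-1$, i.e.\ on $j\le 2r+2$; the value $r=j$ is unramified by item (b), since $e(P^j_{\pm1}|P^{j-1}_1)=1$; and the extended values $r=-2,-1$ are ramified with $e=2$ by item (a), consistently with $j\ge 2r+3$ reducing to $j\ge -1$ and $j\ge 1$, both automatic for $j\ge 1$. Collecting these, the places in $D^j_r$ ramify in $T_j/T_{j-1}$ exactly when $j\ge 2r+3$.

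The point requiring most care, and the main obstacle, is the floor bookkeeping at the boundary $j=2r+2$, where neither strict inequality $j\ge 2r+3$ nor $j\le 2r+1$ holds, yet the two branches of the formula must agree; they do, since $2^{j-r}=2^{r+2}$ when $j=2r+2$. The clean way to see that this boundary is covered is to note that for $j=2r+2$ one has $\lfloor(j-1)/2\rfloor=r$, so this $r$ lies in the range of item (d); combined with the identity $\lfloor(j-1)/2\rfloor=\lfloor(j-3)/2\rfloor+1$, which shows the ranges of items (c) and (d) are adjacent with no gap, this guarantees that every $r$ with $0\le r\le j-2$ is covered exactly once and assigned the correct value, completing the argument.
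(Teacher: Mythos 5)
Your proposal is correct and matches the paper's argument: the paper gives no written proof, simply asserting the corollary is a consequence of Lemma~\ref{LemmaRamification}, and your case-by-case reading of items (a)--(e) --- including the parity checks $r\geq\lfloor(j-1)/2\rfloor\iff j\leq 2r+2$ and $r\leq\lfloor(j-3)/2\rfloor\iff j\geq 2r+3$, and the agreement $2^{j-r}=2^{r+2}$ at the boundary $j=2r+2$ --- is exactly the intended verification. The boundary cases $r\in\{-2,-1,j-1,j\}$ are handled correctly (in particular $r=j$ via $e(P^j_{\pm 1}|P^{j-1}_1)=1$ from Lemma~\ref{LemmaRamification}(b)), so nothing is missing.
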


\begin{proposition}\label{PropDivisorxj}
{\itshape The divisors of $x_j$ and $1+x_j$ in $T_j$ are given by the following formulae:\\
$(x_0)=-P_\infty^0+P_0^0$, $
(x_1)=-P_\infty^1-P_0^1+D_{-1}^1$, and:
$$(x_j)={\displaystyle -P_\infty^j-
\sum_{r=-2}^{\lfloor{\frac{j-3}{2}}\rfloor }
 D_r^j-
\sum_{r=\lfloor\frac{j-1}{2}\rfloor}^{j-3}
2^{2r-j+2}  D_r^j+ 2^{j-2} D_{j-2}^j}\quad\mbox{if}\quad j\geq 2,$$
 $ (1+x_0)=-P_\infty^0+P_{-1}^0$, 
$(1+x_1)=-P_\infty^1-P_0^1+2P_{-1}^1$, and:
$$(1+x_j)={\displaystyle -P_\infty^j-
\sum_{r=-2}^{\lfloor{\frac{j-3}{2}}\rfloor }
 D_r^j-
\sum_{r=\lfloor\frac{j-1}{2}\rfloor}^{j-3}
2^{2r-j+2}  D_r^j+ 2^{j} P_{-1}^j}\quad\mbox{if}\quad j\geq 2.$$
}
\end{proposition}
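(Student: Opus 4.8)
The plan is to compute $(x_j)$ by determining the valuation $v_P(x_j)$ at every place $P$ of $T_j$. By Lemma \ref{LemmaRamification}(g) the function $x_j$ has neither zeros nor poles outside the places lying over $\cl{R}$, so it suffices to read off the local data from the catalogue in Lemma \ref{LemmaRamification}(a)--(e). At $P_\infty^j$ we have valuation $-1$; at $P_0^j=D_{-2}^j$ we have $v=1$ for $j=0$ and $v=-1$ for $j\geq 1$; at the two places $P^j_{\pm i}$, which together form $D_{-1}^j$, we have $v=0,1,-1$ according to whether $j=0,1,\geq 2$. The remaining contributions come from the places over the various $P_{-1}^r$.

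For these, Lemma \ref{LemmaRamification}(c) gives $v_Q(x_j)=-1$ for each place $Q$ over $P_{-1}^r$ when $0\leq r\leq \lfloor(j-3)/2\rfloor$, while (d) gives $v_Q(x_j)=-2^{2r-j+2}$ when $\lfloor(j-1)/2\rfloor\leq r\leq j-3$ and $v_Q(x_j)=2^{j-2}$ when $r=j-2$. For $r=j-1$ the relevant places are $Q^j_{\pm i}$, where $x_j=\pm i$, and for $r=j$ the place is $P^j_{-1}$, where $x_j=-1$; in both cases the valuation is $0$. Grouping each family of places over a fixed $P_{-1}^r$ into the divisor $D_r^j$ (using the extended definitions $D_{-2}^j=P_0^j$ and $D_{-1}^j=P^j_i+P^j_{-i}$), the index ranges appearing in the statement are precisely the case boundaries of (c) and (d), and the displayed formula for $(x_j)$ drops out. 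The cases $j=0,1$ are read off directly in the same way.

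For $1+x_j$ the poles coincide with those of $x_j$, since the two functions differ by the constant $1$. The only new feature is the zero divisor: by Lemma \ref{LemmaRamification}(b) the place $P^j_{-1}$ is the \emph{unique} place at which $x_j=-1$, so it carries the entire zero divisor of $1+x_j$. To pin down its order, I would exploit that $(1+x_j)$ is principal and hence of degree $0$. Summing the pole degrees read off from the $(x_j)$ formula via Corollary \ref{PropRamification} gives $\mr{deg}(x_j)_\infty=2^j$: the contribution $\sum_{r=-2}^{\lfloor(j-3)/2\rfloor}2^{r+2}$ and the contribution $\sum_{r=\lfloor(j-1)/2\rfloor}^{j-3}2^{2r-j+2}\cdot 2^{j-r}=\sum 2^{r+2}$ are two truncated geometric sums whose boundary terms telescope, because $\lfloor(j-1)/2\rfloor=\lfloor(j-3)/2\rfloor+1$. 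Since $\mr{deg}\,P^j_{-1}=\mr{deg}\,D_j^j=1$ by Corollary \ref{PropRamification}, the order of the zero must be exactly $2^j$, yielding the coefficient $2^j P_{-1}^j$ in the formula for $(1+x_j)$.

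I expect the main obstacle to be purely organizational: carefully matching the three valuation regimes of Lemma \ref{LemmaRamification}(c)--(d) with the summation ranges $-2\leq r\leq\lfloor(j-3)/2\rfloor$, $\lfloor(j-1)/2\rfloor\leq r\leq j-3$, and the isolated term $r=j-2$, while verifying that the regimes $r=j-1,j$ contribute nothing. The one genuinely computational point is the degree telescoping that forces the zero order of $1+x_j$ to equal $2^j$; everything else is a direct transcription of the ramification data already established.
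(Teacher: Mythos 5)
Your proposal is correct and takes essentially the same route as the paper: both arguments read the valuations of $x_j$ place-by-place from Lemma \ref{LemmaRamification} (with item $(g)$ excluding everything outside $\cl{R}$), then observe that $1+x_j$ has the same poles as $x_j$ and $P_{-1}^j$ as its unique zero, whose multiplicity is forced to be $2^j$ by $\mr{deg}(x_j)_\infty=2^j$. The only cosmetic difference is that the paper imports the degree $2^j$ from the computation in the proof of item $(h)$ of the lemma, whereas you rederive it from Corollary \ref{PropRamification} by concatenating the two geometric sums via $\lfloor(j-1)/2\rfloor=\lfloor(j-3)/2\rfloor+1$ --- the same calculation, made self-contained.
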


\begin{proof} We apply Lemma \ref{LemmaRamification}. The claim on the divisor of $x_j$ follows. 
The function $1+x_j$
has the same poles as $x_j$; moreover, it has $P_{-1}^j$ as unique zero. 
By the proof of item $(h)$ of the lemma 
we have: $\mr{deg}(1+x_j)_\infty=\mr{deg}(x_j)_\infty=2^j$. Hence, the coefficient
of  $P_{-1}^j$ of $(1+x_j)$ is $2^j$.

\end{proof}

\begin{proposition}\label{PropGenus} {\itshape
For $j\geq 0$ the genus $g_j$ of $T_j/K$ is given by:
$$g_j=\left\{\begin{array}{ll}
(2^{\frac{j+2}{2}}-1)(2^{\frac{j}{2}}-1)&\mbox{if }\,  j\, \mbox{ is
even}\\\noalign{\medskip}
(2^{\frac{j+1}{2}}-1)^2&\mbox{if }\, j\, \mbox{ is odd}.
\end{array}\right.$$
}
\end{proposition}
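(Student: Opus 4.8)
The plan is to compute $g_j$ recursively by applying the Hurwitz genus formula to the degree-two Kummer extension $T_j/T_{j-1}$ and then solving the resulting recursion. Since $\mr{char}(K)=p$ is odd and $[T_j:T_{j-1}]=2$ is coprime to $p$, every ramified place is tamely ramified; as the ramification index of a ramified place is $e=2$, its different exponent equals $e-1=1$. Consequently the degree of the different $\mr{Diff}(T_j/T_{j-1})$ equals the degree of the ramification divisor, i.e. the sum of the degrees of the places of $T_j$ that ramify over $T_{j-1}$.

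Next I would determine this ramification divisor explicitly. By Lemma \ref{LemmaRamification}(g) applied at level $j-1$ together with the Kummer criterion of Lemma \ref{RemRamification}(a), a place $P$ of $T_{j-1}$ not lying over $\cl{R}$ satisfies $v_P(x_{j-1})=0$ and $v_P(x_{j-1}^2+1)=0$, so $v_P\!\left(\frac{x_{j-1}^2+1}{2x_{j-1}}\right)=0$ is even and $P$ is unramified in $T_j/T_{j-1}$; hence all ramification is concentrated over $\cl{R}$. Among the places over $\cl{R}$, Lemma \ref{LemmaRamification}(a) gives that $P^j_\infty$ ramifies for $j\geq 1$, Lemma \ref{LemmaRamification}(b) gives that $P^j_{\pm 1}$ are unramified, and Corollary \ref{PropRamification} states precisely that the places in $D^j_r$ ramify in $T_j/T_{j-1}$ if and only if $j\geq 2r+3$, i.e. for $-2\leq r\leq \lfloor(j-3)/2\rfloor$. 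For each such $r$ we have $j>2r+2$, so the corollary gives $\mr{deg}(D^j_r)=2^{r+2}$. Therefore
$$\mr{deg}\,\mr{Diff}(T_j/T_{j-1})=\mr{deg}(P^j_\infty)+\sum_{r=-2}^{\lfloor(j-3)/2\rfloor}\mr{deg}(D^j_r)=1+\sum_{r=-2}^{\lfloor(j-3)/2\rfloor}2^{r+2}=2^{\lfloor(j+3)/2\rfloor},$$
the last equality being an evaluation of a geometric sum.

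Feeding this into the Hurwitz genus formula yields the recursion
$$2g_j-2=2(2g_{j-1}-2)+2^{\lfloor(j+3)/2\rfloor}\qquad(j\geq 1),$$
with base case $g_0=0$. I would then finish by induction on $j$: assuming the closed form at level $j-1$, substitute it into the recursion and verify that it reproduces the closed form at level $j$, treating the two parities of $j$ separately (so that the floor becomes $s+1$ when $j=2s$ and $s+2$ when $j=2s+1$, while the parity of $j-1$ selects the complementary branch of the formula). Each case then reduces to an elementary identity between powers of $2$.

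I expect the only delicate point to be the bookkeeping with the floor functions and parities: one must check that $P^j_\infty$ is counted exactly once and is not among the $D^j_r$, that the index range $-2\leq r\leq\lfloor(j-3)/2\rfloor$ is exactly the ramified range singled out by Corollary \ref{PropRamification}, and that the parity split in the final induction matches the two cases of the stated formula. The remaining computations are routine.
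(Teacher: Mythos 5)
Your proposal is correct and takes essentially the same route as the paper: the paper likewise applies the Riemann--Hurwitz formula to each tame degree-$2$ step of the tower, reads the ramification locus off the tower's ramification structure (you invoke its repackaged form, Corollary \ref{PropRamification}, together with Lemma \ref{LemmaRamification}(a) for $P^j_\infty$, while the paper directly counts the places $P$ of $T_j$ with $v_P\bigl((x_j^2+1)/2x_j\bigr)$ odd), and finishes by induction on the resulting recursion. The only organizational difference is that you count the ramification divisor upstairs in $T_j$, obtaining the uniform one-step recursion $2g_j-2=2(2g_{j-1}-2)+2^{\lfloor (j+3)/2\rfloor}$, whereas the paper counts downstairs and iterates two steps to get the parity-split relations $g_{j+2}=4g_j+\cdots$; both reduce to the same closed form.
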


\begin{proof} By the Riemann-Hurwitz Theorem  we have that
for any $j\geq 0$:
$$ g_{j+1}=2g_j-1+\frac{1}{2}R_j$$
where $R_j=\sum_P \mr{deg}\,P$, and the sum is over all places of $T_j/K$ such that
$v_P((x_j^2+1)/2x_j)$ is odd.
The last condition is satisfied in two cases: 
$v_P(x_j)=0$ and $v_P(x_j^2+1)$ is odd; or $v_P(x_j)$ is odd.
By Lemma \ref{LemmaRamification},
the first case is realized exactly  by $P_{\pm i}^j$ if $j=0$.
The second one is realized by the places: $P^j_\infty$, $P_0^j$; $P^j_{\pm i}$ if $j\geq 1$;
$Q^j_r$ with $0\leq r\leq \lfloor (j-3)/2\rfloor$ (the sum of
the degrees of such places is $2^{r+2}$);
$Q^j_r$ with $\lfloor (j-1)/2\rfloor\leq r\leq j-2$ and: $(a)$ $r=j-2$ and $j-2=0$; or
$(b)$ $r\leq j-3$ and $2r-j+2=0$
(the sum of the degrees of such places is $2^{j-r}$).

If $j$ is odd then cases $(a)$ and $(b)$ above do not contribute, and we have
$ R_j=1+\sum_{n=0}^{{\scriptscriptstyle (j+1)/2}}2^n=2^{\frac{j+3}{2}}$.
If $j$ is even then case $(a)$ contributes with $r=0$ if $j=2$ (the
sum of the degrees is 4),
and case $(b)$ contributes
with $r=(j-2)/2$ if $j\geq 4$ (the sum of the degrees is $2^{r+2}$). Hence,
$ R_j=1+\sum_{n=0}^{{\scriptscriptstyle (j+2)/2}}2^n=2^{\frac{j+4}{2}}$.

From this we deduce the following recursive relations:
$$g_{j+2}=\left\{\begin{array}{ll}
 4g_j+3\cdot2^{\frac{j+2}{2}}-3&\quad\mbox{if}\quad j \quad\mbox{is even}\\\noalign{\medskip}
 4g_j+2^{\frac{j+5}{2}}-3&\quad\mbox{if}\quad j \quad \mbox{is odd}.
\end{array}\right.$$
Since $g_0=0$, from which $g_1=1$, then the thesis follows by induction.
\end{proof}

Next, we state a particular case of Theorem 2.2 \cite{Maharaj}, by H. Maharaj,
that will play a central role in the construction of
a basis of the Riemann-Roch space $L(sP_\infty^j)$. 
In order to state the theorem we need the notions of invariant divisor and 
of restriction of a divisor; this is the content of the next two remarks.

\begin{remark}\label{RemInvDiv}
We will be dealing with divisors in $T_j/K$ of the form:
$$ D=\alpha_\infty P_\infty^j+\sum_{r=-2}^{j-1} \alpha_r D^j_r+\alpha_j P^j_{-1}$$
where the coefficients $\alpha_\infty$, $\alpha_r$ and $\alpha_j$ are integers.
If $j\geq 1$ then such a divisor is invariant under the
action of the Galois group of $T_j/T_{j-1}$ (briefly, invariant in $T_j/T_{j-1}$)
if and only if $\alpha_j=0$. Indeed, because the unique non-trivial automorphism of
$T_j/T_{j-1}$ sends $x_{j}$ to $-x_j$,  the place $P^j_{-1}$ is sent to $P^j_1$,
while the divisors $P_\infty^j$ and $D^j_r$ with $-2\leq r\leq j-1$ are invariant. 

\end{remark}

\begin{remark}\label{RemResDiv}
Let $j\geq 1$, and $D=\alpha_\infty P_\infty^j+\sum_{r=-2}^{j-1} \alpha_r D^j_r$
be a  divisor of $T_j/K$.
Then, applying the definition  given in \cite{Maharaj},
and applying Corollary \ref{PropRamification}, the
restriction of $D$ to $T_{j-1}$ is given by:
$$ D_{|T_{j-1}}=
{\textstyle \left \lfloor \frac{\alpha_\infty}{2}\right \rfloor}P_\infty^{j-1}+
\sum_{r=-2}^{\lfloor{\frac{j-3}{2}}\rfloor }
{\textstyle \left \lfloor \frac{\alpha_r}{2}\right \rfloor}
 D^{j-1}_r+\sum_{r=\lfloor{\frac{j-1}{2}}\rfloor }^{j-1}
 \alpha_r D^{j-1}_r.$$

\end{remark}

\begin{theorem} \label{ThMaharaj}
Let $j\geq 1$ and $D$ be a divisor in $T_j/K$ that is invariant in
$T_j/T_{j-1}$. Then:
$$ L(D)=L(D_{|T_{j-1}})\oplus L([D+(x_j)]_{|T_{j-1}})x_j.$$
\end{theorem}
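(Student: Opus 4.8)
The plan is to prove the direct-sum decomposition by exploiting the fact that $T_j = T_{j-1} \oplus T_{j-1}\,x_j$ as a $T_{j-1}$-vector space, since $T_j/T_{j-1}$ is a Kummer extension of degree $2$ with $x_j^2 = (x_{j-1}^2+1)/(2x_{j-1}) \in T_{j-1}$. Every element $z \in T_j$ writes uniquely as $z = u + v\,x_j$ with $u,v \in T_{j-1}$. The strategy will be to show that the condition $z \in L(D)$ decouples into the condition $u \in L(D_{|T_{j-1}})$ together with the condition $v\,x_j \in L([D+(x_j)]_{|T_{j-1}})\,x_j$, i.e. $v \in L([D+(x_j)]_{|T_{j-1}})$. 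Let me sketch the standard proof.

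First I would recall Maharaj's notion of restriction: for a divisor $E$ of $T_j$ invariant under $\mathrm{Gal}(T_j/T_{j-1})$, the restriction $E_{|T_{j-1}}$ is characterized by $L(E) \cap T_{j-1} = L(E_{|T_{j-1}})$. Then I would check that the two pieces $u$ and $v\,x_j$ lie in $T_{j-1}$ and $T_{j-1}\,x_j$ respectively, which are distinct $\mathrm{Gal}(T_j/T_{j-1})$-isotypic components (the nontrivial automorphism $\sigma$ fixes $u$ and sends $v\,x_j \mapsto -v\,x_j$, since $\sigma(x_j)=-x_j$). Because $D$ is invariant, so is $L(D)$, and hence $L(D)$ decomposes as the direct sum of its $\sigma$-eigenspaces: the $+1$-eigenspace $L(D)\cap T_{j-1}$ and the $-1$-eigenspace $L(D)\cap T_{j-1}\,x_j$. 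The decomposition $L(D) = \bigl(L(D)\cap T_{j-1}\bigr) \oplus \bigl(L(D)\cap T_{j-1}\,x_j\bigr)$ is then immediate from $u = \tfrac{1}{2}(z+\sigma z)$, $v\,x_j = \tfrac{1}{2}(z-\sigma z)$, both of which remain in $L(D)$ by invariance (here I use $\mathrm{char}\,K \neq 2$).

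It remains to identify the two eigenspaces. The $+1$-eigenspace is $L(D)\cap T_{j-1} = L(D_{|T_{j-1}})$ by the defining property of restriction. For the $-1$-eigenspace, I would argue that $v\,x_j \in L(D)$ if and only if $(v\,x_j) \geq -D$, i.e. $(v) \geq -D - (x_j)$, i.e. $v \in L(D+(x_j))$; but $v \in T_{j-1}$, so this is equivalent to $v \in L(D+(x_j)) \cap T_{j-1} = L\bigl([D+(x_j)]_{|T_{j-1}}\bigr)$, again by the restriction property — provided $D+(x_j)$ is itself invariant in $T_j/T_{j-1}$. Thus $L(D)\cap T_{j-1}\,x_j = L\bigl([D+(x_j)]_{|T_{j-1}}\bigr)\,x_j$, which finishes the proof.

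The main obstacle I anticipate is the invariance requirement for $D+(x_j)$: Maharaj's restriction is only defined for invariant divisors, so to invoke the restriction property on the $-1$-eigenspace I must verify that $D+(x_j)$ is invariant under $\sigma$ even though $D$ is. Since $\sigma(x_j)=-x_j$ and the divisor $(x_j)$ is determined by the zeros and poles of $x_j$, I would check via Proposition \ref{PropDivisorxj} that $(x_j)$ is invariant: its support consists of $P_\infty^j$ and the divisors $D^j_r$ together with $P_{-1}^j$, and although $P_{-1}^j$ is individually moved to $P_1^j$ by $\sigma$, the divisor $(x_j)$ as a whole equals $\tfrac{1}{2}\bigl((x_j)+\sigma(x_j)\bigr)$ because $(x_j) = -\sigma(x_j) = (\sigma(x_j)^{-1})$ differs from $(x_j)$ only by the principal divisor $(-1)=0$ — more precisely $\sigma(x_j)=-x_j$ has the same divisor as $x_j$, so $(x_j)$ is $\sigma$-invariant. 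Granting this, $D+(x_j)$ is invariant whenever $D$ is, and the restriction $[D+(x_j)]_{|T_{j-1}}$ is well-defined, so the argument goes through.
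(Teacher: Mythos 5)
Your proof is correct, but it takes a genuinely different route from the paper: the paper's entire proof is a one-line citation of Theorem 2.2 of \cite{Maharaj}, applied to the Kummer extension $T_j/T_{j-1}$, whereas you reprove that result from scratch in the degree-$2$ case. Your argument --- write $z=u+vx_j$, split $L(D)$ into $\sigma$-eigenspaces via $u=\frac{1}{2}(z+\sigma z)$ and $vx_j=\frac{1}{2}(z-\sigma z)$ (using $\mathrm{char}\,K\neq 2$ and the invariance of $D$), identify the $+1$-eigenspace with $L(D_{|T_{j-1}})$ through the property $L(E)\cap T_{j-1}=L(E_{|T_{j-1}})$, and identify the $-1$-eigenspace by shifting by $(x_j)$ --- is essentially Maharaj's own proof specialized to degree $2$, so the mathematical content agrees; what your route buys is a self-contained exposition, at the cost of re-deriving an outsourced result. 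Two points to tidy up. First, with Maharaj's actual definition of restriction (each coefficient divided by the ramification index and rounded down, as in Remark \ref{RemResDiv}), the identity $L(E)\cap T_{j-1}=L(E_{|T_{j-1}})$ that you treat as the defining property is a short valuation computation from $v_P(u)=e(P|Q)\,v_Q(u)$ for $u\in T_{j-1}$, and it is invoked twice (for $E=D$ and for $E=D+(x_j)$), so it deserves an explicit sentence. Second, your verification that $(x_j)$ is invariant misreads Proposition \ref{PropDivisorxj}: for $j\geq 2$ the support of $(x_j)$ does not contain $P_{-1}^j$ (that place occurs in the divisor of $1+x_j$), and the intermediate identity ``$(x_j)=-\sigma(x_j)$'' conflates functions with divisors; however, your final clean argument --- $\sigma\bigl((x_j)\bigr)=(\sigma x_j)=(-x_j)=(x_j)$, since multiplying a function by a nonzero constant does not change its divisor --- is correct and renders the support discussion unnecessary, so $D+(x_j)$ is indeed invariant and its restriction is well-defined.
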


\begin{proof}
This is Theorem 2.2 of \cite{Maharaj} applied to the
Kummer extension $T_j/T_{j-1}$.
\end{proof}

In the next remark we explain how to compute the power series expansion
of the generators $x_k$ around $P^j_{-1}$, for $k\leq j$, with respect to a suitably chosen
local parameter. This will be used in the sequel to make bases of 
$L(sP_\infty^j)$ and the Weierstrass
semigroups $H(P_\infty^j)$  computable.

\begin{remark}\label{RemPower} The function 
$t:=1-x_0$
is a local parameter around $P^j_{-1}$, for any $j\geq 1$. Indeed, $v_{P^0_1}(1-x_0)=1$ and
$e(P^j_{-1}|P^0_1)=1$. Notice that, for the same reason,
$t$ is a local parameter around
$P^j_1$ as  well, for any $j\geq 0$.

We show how to compute the power series expansion up to any
order $\varepsilon\geq 1$ of $x_j$ around $P_1^j$, for any $j\geq 0$. Assume to have
the expansion:
$ x_j=1+ \sum_{k=1}^\varepsilon a_k t^k + O(t^{\varepsilon+1})$
around $P_1^j$. Then we can compute the expansion
$(x_j^2+1)/2x_j=1+\sum_{k=1}^\varepsilon b_k t^k + O(t^{\varepsilon+1}).$
Since $e(P^{j+1}|P^j)=1$ then the same expansion holds around $P^{j+1}_1$.
Let the expansion of $x_{j+1}$ around $P^{j+1}_1$ be given by:
$ x_{j+1}=1+ \sum_{k=1}^\varepsilon c_k t^k + O(t^{\varepsilon+1})$,
where the $c_k\in K$ have to be computed. By the relation:
$ x_{j+1}^2=(x_j^2+1)/2x_j$
it follows that:
$$1+ \sum_{k=1}^\varepsilon \left(2c_k+\sum_{l=1}^{k-1}c_lc_{k-l}\right)
 t^k + O(t^{\varepsilon+1})=
1+ \sum_{k=1}^\varepsilon b_k t^k + O(t^{\varepsilon+1}).$$
Hence, the unknown coefficients $c_k$ can be computed by induction on
$k$ from
the formulae:
$$ c_k=\frac{1}{2}\left(b_k -\sum_{l=1}^{k-1}c_lc_{k-l}\right) \qquad 1\leq
k\leq \varepsilon.$$

We also remark that for $j\geq 1$ and $k$ with $0\leq k\leq j$ the
expansion of $x_k$ around $P^j_{-1}$ is: the one given above if
$k<j$ (since $e(P^j_{-1}|P^k_1)=1$); minus the one given above if
$j=k$ (since we have to take the other determination of the square root
of $(x_j^2+1)/2x_j$).
\ep

\end{remark}


\section{Hermitian basis of the Riemann-Roch space $L(sP_\infty^j)$}

We state the main result of the paper: Theorem \ref{ThBasisLPinf}. This 
gives bases of the spaces $L(sP^j_\infty)$, as $j$ and $s$ vary, in a
constructive way. As a corollary of the main theorem we get
a constructive way to compute the Weierstrass semigroups $H(P_\infty^j)$: this
is the content of Theorem \ref{ThWeierLPinf}. We recall that a basis of a Riemann-Roch
space is Hermitian (with respect to $P_\infty^j$) 
if its elements have distinct pole order at $P_\infty^j$.

\begin{theorem}\label{ThBasisLPinf}
\textit{There exists an algorithm that, for all $j\geq 0$
and all $m$ with $0\leq m<2^j$,
 constructs integers $c_m^{(j)}$ and functions
$w_m^{(j)}\in T_j$ s.t. for
all $s\in\bb{Z}$ the family parametrized by $m$ and $l$:
$$ x_0^lw_m^{(j)}\qquad\qquad 0\leq m<2^j\qquad 0\leq l\leq
{\textstyle \left \lfloor \frac{s-m}{2^j}\right \rfloor} -c_m^{(j)}:=l_m(s)$$
is a Hermitian basis of $L(sP_\infty^j)$. (When the value of $s$ is such that
$l_m(s)$ is negative for a given $m$, it is understood
that the $m$-th family does not contribute to the basis.)}
\end{theorem}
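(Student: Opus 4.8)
The plan is to argue by induction on $j$, peeling off one level of the tower at each step with Maharaj's decomposition (Theorem \ref{ThMaharaj}) and controlling pole orders at $P^j_\infty$ by parity. The base case $j=0$ is immediate: in $T_0=K(x_0)$ the family $\{x_0^l : 0\le l\le s\}$ is a Hermitian basis of $L(sP^0_\infty)$, so one takes $w^{(0)}_0=1$ and $c^{(0)}_0=0$. Throughout, the crucial numerical fact is that $v_{P^j_\infty}(x_0)=-2^j$ (since $e(P^j_\infty|P^0_\infty)=2^j$ by Lemma \ref{LemmaRamification}(a)), so multiplication by $x_0^l$ shifts the pole order at $P^j_\infty$ by exactly $2^jl$. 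Consequently, once a basis function has pole order $p$ at $P^j_\infty$, I would set $m:=p\bmod 2^j$ and $c^{(j)}_m:=\lfloor p/2^j\rfloor$; the elementary identity $\lfloor (s-p)/2^j\rfloor=\lfloor(s-m)/2^j\rfloor-c^{(j)}_m$ then turns the membership condition $p+2^jl\le s$ into precisely the range $0\le l\le l_m(s)$ of the statement.

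For the inductive step note that $sP^j_\infty$ is invariant in $T_j/T_{j-1}$ (Remark \ref{RemInvDiv}, there being no $P^j_{-1}$ term), so Theorem \ref{ThMaharaj} yields $L(sP^j_\infty)=L\big((sP^j_\infty)_{|T_{j-1}}\big)\oplus L\big([sP^j_\infty+(x_j)]_{|T_{j-1}}\big)\,x_j$. By Remark \ref{RemResDiv} the first summand is simply $L(\lfloor s/2\rfloor P^{j-1}_\infty)$, to which the induction hypothesis applies verbatim; since $e(P^j_\infty|P^{j-1}_\infty)=2$, every function arising from $T_{j-1}$ has even pole order at $P^j_\infty$, while $v_{P^j_\infty}(x_j)=-1$ forces every function in the second summand to have odd pole order. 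This parity dichotomy is the mechanism behind the Hermitian property: the even residues modulo $2^j$ are supplied by the first summand (via $w^{(j)}_{2n}=w^{(j-1)}_n$, the ranges of $l$ matching after a short nested-floor computation), and the odd residues by the second, so the $2^j$ progressions $\{p_m+2^jl\}$ occupy the $2^j$ distinct residue classes modulo $2^j$.

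The substantive difficulty lies entirely in the second summand. Combining Proposition \ref{PropDivisorxj} for $(x_j)$ with Remark \ref{RemResDiv}, the divisor $[sP^j_\infty+(x_j)]_{|T_{j-1}}$ is \emph{not} a one-point divisor: it carries nonzero coefficients at the divisors $D^{j-1}_r$ supported over the places $P^r_{-1}$. Hence the induction hypothesis, stated only for $L(sP^{j-1}_\infty)$, does not apply, and one is forced to prove the stronger statement that $L(D)$ admits a structured Hermitian basis for \emph{every} divisor $D=\alpha_\infty P^j_\infty+\sum_{r=-2}^{j}\alpha_r D^j_r$ of this shape (recall $D^j_j=P^j_{-1}$). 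The genuine obstacle then surfaces: although the first divisors met are invariant, iterating $D\mapsto[D+(x_k)]_{|T_{k-1}}$ eventually produces a nonzero coefficient at $P^{k-1}_{-1}=D^{k-1}_{k-1}$ (one checks, for instance, that the coefficient $2^{j-2}$ at $D^{j-1}_{j-2}$ restricts to $2^{j-2}P^{j-2}_{-1}$ at the next level), and by Remark \ref{RemInvDiv} such a divisor is no longer invariant in $T_{k-1}/T_{k-2}$, so Theorem \ref{ThMaharaj} cannot be applied again.

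To get past this I would split off the coefficient at $P_{-1}$ from the invariant part, run Maharaj on the invariant part, and then impose the conditions at $P_{-1}$ by hand: a prescribed pole or zero order at $P^{k-1}_{-1}$ translates into explicit linear conditions on the leading coefficients of the power-series expansions of the $x_k$ around $P^{k-1}_{-1}$ with respect to the local parameter $t=1-x_0$, computed recursively as in Remark \ref{RemPower}, together with the functions $1\pm x_k$ whose divisors (Proposition \ref{PropDivisorxj}) concentrate exactly at the places $P_{\pm1}$. Selecting the subfamily of candidate functions meeting these conditions yields a basis of the modified space; this is the modification carried out in Sections \ref{SubsecBasisLPinfxjres1} and \ref{SubsecII}. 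Assembling the two summands completes the induction, and one may cross-check completeness against $\dim L(sP^j_\infty)=s+1-g_j$ for large $s$ using the genus of Proposition \ref{PropGenus}. I expect the bookkeeping of these $P_{-1}$ conditions — matching the recursively computed power-series data to the noninvariant coefficients that appear at each level — to be the main obstacle.
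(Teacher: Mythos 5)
Your road map coincides with the paper's actual proof: induction on $j$ through Maharaj's decomposition, the parity mechanism ($e(P^j_\infty|P^{j-1}_\infty)=2$ versus $v_{P^j_\infty}(x_j)=-1$) distributing even and odd residues mod $2^j$ between the two summands, the correct identification of the obstruction (iterated restriction of $[sP^j_\infty+(x_j)]_{|T_{j-1}}$ eventually produces a nonzero coefficient at $P^{k}_{-1}$, destroying invariance in $T_k/T_{k-1}$ by Remark \ref{RemInvDiv}), and the correct repair ingredients: the principal divisor $(1+x_k)$, whose only zero is $P^k_{-1}$ with coefficient $2^k$, and power-series expansions in the local parameter $t=1-x_0$ as in Remark \ref{RemPower}. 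This is precisely what Propositions \ref{PropUpDown} and \ref{PropDownUp} implement, via the division $-\alpha^k_n=\gamma^k_n 2^k+\delta^k_n$, $0\le\delta^k_n<2^k$, which twists the divisor into an invariant part $B^k_n(s)$ plus a defect $-\delta^k_n P^k_{-1}$.

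However, the step you compress into ``selecting the subfamily of candidate functions meeting these conditions yields a basis of the modified space'' is the crux of the paper, and as stated it is wrong. Passing from a structured basis of $L(B(s)-\varepsilon P)$ to one of $L(B(s)-(\varepsilon+1)P)$ cannot be done by selecting a subfamily of the $x_0^l z_m$: if two generators $z_{m_1},z_{m_2}$ both have $v_P=\varepsilon$, no sub-selection lands in the smaller space. One must eliminate, replacing $z_m$ by $z_m-\bigl(z_m(P)/z_{\sigma(I)}(P)\bigr)z_{\sigma(I)}$ for a pivot index $\sigma(I)$, and replacing the pivot itself by $z_{\sigma(I)}(1-x_0)$ with its $l$-range shortened by one ($d$ incremented) --- note the new pivot is not in the original family at all. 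Moreover the theorem demands a single choice of functions and integer offsets valid for \emph{every} $s\in\bb{Z}$ simultaneously, so the pivot must be chosen uniformly in $s$; the paper secures this with Lemma \ref{LemmaSigma} (a unique ordering $\sigma$ with $l_{\sigma(1)}(s)\le\cdots\le l_{\sigma(2^k)}(s)$ for all $s$ at once) and the maximality of $I$, and then verifies the result in Lemma \ref{LemBasisD-P} by a two-case analysis according to whether $L(D-P)=L(D)$ or the codimension is one. Your proposed cross-check of completeness against $\dim L(sP^j_\infty)=s+1-g_j$ for large $s$ cannot substitute for this, since the claim covers small $s$, where the count depends delicately on which $l_m(s)$ are negative. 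So: right decomposition, right obstruction, right ingredients, but the central lemma --- that the elimination can be carried out while preserving the $x_0^l z_m$ shape uniformly in $s$ --- is left unproved, and the mechanism you actually name for it would fail.
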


\begin{proof} The proof is by induction on $j$ and uses Theorems
\ref{ThMaharaj} and \ref{ThBasisLPinfxjres}. For $j=0$, put
$ c^{(0)}_0:=0$ and $ w^{(0)}_0:=1$.

Let $j\geq 1$.  We apply Theorem \ref{ThMaharaj} and get, for
any $s\in\bb{Z}$:
$$L(sP_\infty^j)=L([{sP_\infty^j}]_{|T_{j-1}})
\oplus L([sP_\infty^j+(x_j)]_{|T_{j-1}})x_j.$$
Since
$[sP_\infty^j]_{|T_{j-1}}={\textstyle
\left\lfloor\frac{s}{2}\right\rfloor}P_\infty^{j-1}$
(see Remark \ref{RemResDiv}),
then the induction hypothesis gives a basis of the first direct summand:
$$ x_0^lw_m^{(j-1)}\qquad\qquad 0\leq m< 2^{j-1}\qquad 0\leq l\leq
{\textstyle \left \lfloor \frac{s-2m}{2^j}\right \rfloor} -c_m^{(j-1)}$$
where we used the fact that:
$ {\textstyle\lfloor (\lfloor s/2\rfloor -m)/2^{j-1}\rfloor}=
{\textstyle \left \lfloor (s-2m)/2^j\right \rfloor}$.\\

A basis of $L([sP_\infty^j+(x_j)]_{|T_{j-1}})$ is constructed by
Theorem \ref{ThBasisLPinfxjres}. The desired basis of $L(sP_\infty^j)$ is
obtained by defining, for $0\leq m<2^j$:
$$c_m^{(j)}:=\left\{
\begin{array}{ll}
c_{m/2}^{(j-1)}&\quad\mbox{if }\:m\: \mbox{ is even}\\\noalign{\medskip}
\tilde{c}_{m}^{(j-1)}&\quad\mbox{if }\:m\: \mbox{ is odd}
\end{array}
\right.
\qquad\qquad
w_m^{(j)}:=\left\{
\begin{array}{ll}
w_{m/2}^{(j-1)}&\quad\mbox{if }\:m\: \mbox{ is even}\\\noalign{\medskip}
\tilde{w}_{m}^{(j-1)}x_j&\quad\mbox{if }\:m\: \mbox{ is odd}.
\end{array}
\right.
$$
The fact that the basis is Hermitian follows by Corollary \ref{Cor}.  
\end{proof}

The knowledge of the dimension of $L(sP_\infty^j)$ for fixed $j$ and $s$ variable,
allows to compute the Weierstrass semigroup $H(P_\infty^j)$. 
Because of the structure of the basis
given in the above theorem, we get an elegant and efficient
way to recover the  semigroup.

\begin{theorem}\label{ThWeierLPinf}
\textit{For any $j\geq 0$
the Weierstrass semigroup $H(P_\infty^j)$ can be recovered
 from the coefficients $c_m^{(j)}$ given in Theorem \ref{ThBasisLPinf}
 as follows.
For any $s\in\bb{Z}$, let $q(s)$
and $m(s)$ be quotient and rest of the division of $s$ by $2^j$, i.e.,
$s=2^jq(s)+m(s)$ with $0\leq m(s)< 2^j$. Then:
$$ s\in H(P_\infty^j)\qquad\iff\qquad q(s)\geq c_{m(s)}^{(j)}.$$
}
\end{theorem}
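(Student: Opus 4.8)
The plan is to read off the Weierstrass semigroup directly from the Hermitian basis produced in Theorem \ref{ThBasisLPinf}. The key observation is that a Hermitian basis of $L(sP_\infty^j)$ has, by definition, elements with pairwise distinct pole orders at $P_\infty^j$, so the set of pole orders realized by basis elements is exactly the set of nongaps of $H(P_\infty^j)$ that are at most $s$. More precisely, $s\in H(P_\infty^j)$ if and only if there is a function in $T_j$ with pole divisor exactly $sP_\infty^j$, which (since $P_\infty^j$ is a single place) is equivalent to $\dim L(sP_\infty^j)>\dim L((s-1)P_\infty^j)$, i.e. to the existence of a basis element of pole order exactly $s$. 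So I would first reduce the statement to counting when a new basis element appears as $s$ increases by one.

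First I would compute the pole order at $P_\infty^j$ of each basis function $x_0^l w_m^{(j)}$. Since $v_{P_\infty^j}(x_0)=-2^j$ (because $v_{P^j_\infty}(x_j)=-1$ by Lemma \ref{LemmaRamification}(a) and $x_0$ has a pole of order $2^j$ at $P_\infty^j$ by the ramification $e(P^j_\infty|P^0_\infty)=2^j$), the factor $x_0^l$ contributes pole order $2^j l$. Writing $\pi_m$ for the pole order of $w_m^{(j)}$ at $P_\infty^j$, the basis element $x_0^l w_m^{(j)}$ has pole order $2^j l+\pi_m$. The crucial arithmetic fact I would want is that the constant $c_m^{(j)}$ is precisely chosen so that $\pi_m=2^j c_m^{(j)}+m$; this makes the pole order equal to $2^j(l+c_m^{(j)})+m$, and as $l$ ranges over $0\le l\le \lfloor (s-m)/2^j\rfloor-c_m^{(j)}$ and $m$ ranges over $0\le m<2^j$, these pole orders run through exactly the integers $n\le s$ lying in the set $\{\,2^j k+m : k\ge c_m^{(j)},\ 0\le m<2^j\,\}$. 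I would verify this residue-and-range claim by induction on $j$ in parallel with Theorem \ref{ThBasisLPinf}, tracking how the two cases ($m$ even versus $m$ odd) in the definition of $c_m^{(j)}$ and $w_m^{(j)}$ interact with $v_{P_\infty^j}(x_j)=-1$.

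Granting that the nongaps are exactly $\{2^j k+m : k\ge c_m^{(j)}\}$, the theorem follows by a direct unwinding: writing $s=2^j q(s)+m(s)$ with $0\le m(s)<2^j$, the integer $s$ is a nongap if and only if its residue $m(s)$ and quotient $q(s)$ satisfy $q(s)\ge c_{m(s)}^{(j)}$, which is the asserted equivalence. The main obstacle is the inductive bookkeeping for the pole orders: one must check that the Hermitian property (distinct pole orders, guaranteed by the referenced Corollary \ref{Cor}) combines with the recursive definitions of $c_m^{(j)}$ and $w_m^{(j)}$ so that each residue class $m$ modulo $2^j$ is hit, and hit with the correct smallest pole order $2^j c_m^{(j)}+m$. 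I would expect the even case to follow transparently from the induction hypothesis at level $j-1$ (since $w_{m/2}^{(j-1)}$ already has controlled pole order and $2^{j-1}c_{m/2}^{(j-1)}$ rescales correctly), while the odd case, involving the factor $x_j$ and the auxiliary constants $\tilde c_m^{(j-1)}$ coming from Theorem \ref{ThBasisLPinfxjres}, is where the verification that the pole order lands in the correct residue class $m$ demands the most care.
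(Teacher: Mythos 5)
Your proposal is correct, but it follows a genuinely different route from the paper's own proof. The paper never computes pole orders at all: writing $l_m(s)=\lfloor (s-m)/2^j\rfloor - c_m^{(j)}$ and $\sharp(L)=\max\{L+1,0\}$, it counts $\dim L(sP_\infty^j)=\sum_{m=0}^{2^j-1}\sharp(l_m(s))$, observes that in passing from $s-1$ to $s$ the bound $l_m$ increases by $1$ for the single residue $m=m(s)$ and is unchanged for all other $m$, and concludes that $\dim L(sP_\infty^j)-\dim L((s-1)P_\infty^j)=1$ iff $l_{m(s)}(s)\geq 0$ iff $q(s)\geq c_{m(s)}^{(j)}$ --- a pure counting argument that uses only the cardinality of the family in Theorem \ref{ThBasisLPinf}, not the Hermitian property. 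You instead identify the nongaps with the pole orders of the basis elements, which hinges on your claim $\pi_m=2^jc_m^{(j)}+m$; that claim is precisely the pole-divisor statement of Corollary \ref{Cor} (which the paper places after the theorem, but whose pole-divisor part does not depend on it, so your order of dependence is sound, and your use of the Hermitian property is likewise legitimate since it is part of the statement of Theorem \ref{ThBasisLPinf}). What each route buys: yours produces an explicit function realizing each nongap, namely $x_0^{q(s)-c^{(j)}_{m(s)}}w^{(j)}_{m(s)}$, while the paper's is shorter and independent of the Hermitian claim. One concrete simplification to your plan: the induction on $j$ you envisage for $\pi_m=2^jc_m^{(j)}+m$ is unnecessary, and in the odd case it would be genuinely delicate, since the functions $\tilde{w}_m^{(j-1)}$ lie in Riemann--Roch spaces of multi-point divisors at level $j-1$, so tracking pole orders through the recursion re-imports all of Proposition \ref{PropDownUp}; instead, exploit that the basis statement of Theorem \ref{ThBasisLPinf} holds for \emph{every} $s$: taking $s=2^jc_m^{(j)}+m$ gives $l_m(s)=0$ and $l_m(s-1)=-1$, so $w_m^{(j)}\in L(sP_\infty^j)\setminus L((s-1)P_\infty^j)$ (it is a basis vector of the larger space and cannot lie in the span of the subfamily spanning the smaller one), which forces $(w_m^{(j)})_\infty=(2^jc_m^{(j)}+m)P_\infty^j$ in one stroke --- exactly the argument the paper uses for Corollary \ref{Cor}.
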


\begin{proof} Fix $j\geq 0$.
We define a function $\sharp:\bb{Z}\ra\bb{Z}$ by:
$ \sharp(L):=L+1$ if $ L\geq -1$, and $\sharp(L):=0$ if $L\leq -1$.
By counting the elements of the basis of $L(sP_\infty^j)$ given in Theorem
\ref{ThBasisLPinf} we get that for any $s\in\bb{Z}$:
$ \mr{dim}\,L(sP_\infty^j)=\sum_{m=0}^{2^j-1}\sharp(l_m(s))$. 
Define $\Delta(s):=\mr{dim}\,L(sP_\infty^j)-\mr{dim}\,L((s-1)P_\infty^j)=$
$\sum_{m=0}^{2^j-1}[\sharp(l_m(s))-\sharp(l_m(s-1))].$
By writing $s-m=2^jq+r$ with $0\leq r<2^j$, we can deduce that for all $m$ and $s$:
$$ l_m(s)-l_m(s-1)={\textstyle \left \lfloor \frac{s-m}{2^j}\right \rfloor}-
{\textstyle \left \lfloor \frac{s-1-m}{2^j}\right \rfloor}=
\left\{
\begin{array}{ll}
1&\quad\mbox{if }\quad m=s\:\mbox{ mod }2^j\\\noalign{\medskip}
0&\quad\mbox{if }\quad m\neq s\:\mbox{ mod }2^j.
\end{array}\right.$$
It follows that
$\Delta(s)=
\sharp(l_{m(s)}(s))-\sharp(l_{m(s)}(s-1))$.
Then, for any $s\in\bb{Z}$:
$$ s\in H(P^j_\infty)\quad\iff\quad \Delta(s)=1\quad\iff\quad l_{m(s)}(s)\geq 0
\quad \iff \quad q(s)-c_{m(s)}^{(j)}\geq 0.$$
\end{proof}

\begin{corollary}\label{Cor}
With the same notation of Theorem \ref{ThBasisLPinf} we have that the pole divisor
of the function $w_m^{(j)}$ is:
$$ (w^{(j)}_m)_\infty= (2^jc^{(j)}_m+m)P^j_\infty.$$
Moreover, the set:
$$ \{2^j\}\cup \{2^jc^{(j)}_m+m\,| \, 0< m< 2^j\} $$ 
generates $H(P_\infty^j)$. 
\end{corollary}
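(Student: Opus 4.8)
The plan is to establish the two assertions in turn: first the pole-order formula, then the generating property, the latter being a quick consequence of Theorem \ref{ThWeierLPinf}. First I would record two facts. Iterating Lemma \ref{LemmaRamification}(a) gives $e(P_\infty^j|P_\infty^0)=2^j$; since $P_\infty^0$ is the only pole of $x_0$ in $T_0$ and $P_\infty^j$ is the only place of $T_j$ over it, the pole divisor of $x_0$ in $T_j$ is $(x_0)_\infty=2^jP_\infty^j$. Moreover, for each fixed $m$ and all sufficiently large $s$ the function $w_m^{(j)}$ is the $l=0$ member of the basis of $L(sP_\infty^j)$ from Theorem \ref{ThBasisLPinf}; in particular $w_m^{(j)}\in L(sP_\infty^j)$, so its only pole is $P_\infty^j$. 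Write its order as $n_m^{(j)}$; the first assertion is the claim $n_m^{(j)}=2^jc_m^{(j)}+m$.

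I would prove this formula by induction on $j$, mirroring the recursive definitions of $c_m^{(j)}$ and $w_m^{(j)}$ in the proof of Theorem \ref{ThBasisLPinf}. For $j=0$ we have $w_0^{(0)}=1$ and $c_0^{(0)}=0$, so $n_0^{(0)}=0$. For $j\ge 1$ and $m$ even, $w_m^{(j)}=w_{m/2}^{(j-1)}$ is a function of $T_{j-1}$ whose only pole there is $P_\infty^{j-1}$, of order $2^{j-1}c_{m/2}^{(j-1)}+m/2$ by induction; passing to $T_j$ multiplies the order by $e(P_\infty^j|P_\infty^{j-1})=2$, giving $2^jc_{m/2}^{(j-1)}+m=2^jc_m^{(j)}+m$. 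For $m$ odd, $w_m^{(j)}=\tilde w_m^{(j-1)}x_j$. By Lemma \ref{LemmaRamification}(a), $v_{P_\infty^j}(x_j)=-1$, while Theorem \ref{ThBasisLPinfxjres} supplies the analogous pole-order statement at level $j-1$, namely that the only pole of $\tilde w_m^{(j-1)}$ in $T_{j-1}$ is $P_\infty^{j-1}$, of order $2^{j-1}\tilde c_m^{(j-1)}+(m-1)/2$. Since $w_m^{(j)}$ lies in the second summand $L([sP_\infty^j+(x_j)]_{|T_{j-1}})x_j\subseteq L(sP_\infty^j)$ of the decomposition in Theorem \ref{ThMaharaj}, any pole of $x_j$ away from $P_\infty^j$ (Proposition \ref{PropDivisorxj}) must be cancelled by a zero of $\tilde w_m^{(j-1)}$; hence the unique pole of $w_m^{(j)}$ is $P_\infty^j$, of order $2(2^{j-1}\tilde c_m^{(j-1)}+(m-1)/2)+1=2^j\tilde c_m^{(j-1)}+m=2^jc_m^{(j)}+m$. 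This yields $(w_m^{(j)})_\infty=(2^jc_m^{(j)}+m)P_\infty^j$.

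For the generating statement I would argue directly from Theorem \ref{ThWeierLPinf}. Each listed number lies in $H(P_\infty^j)$: for $2^j$ take $q=1$, $m=0$ and note $c_0^{(j)}=0$; for $2^jc_m^{(j)}+m$ with $0<m<2^j$ take $q=c_m^{(j)}$. Conversely, take any $s\in H(P_\infty^j)$ and write $s=2^jq+m$ with $0\le m<2^j$, so that $q\ge c_m^{(j)}$ by Theorem \ref{ThWeierLPinf}. If $m=0$ then $s=q\cdot 2^j$ with $q\ge 0$, a nonnegative multiple of the generator $2^j$. If $m>0$ then
$$ s=(2^jc_m^{(j)}+m)+(q-c_m^{(j)})\,2^j, $$
and $q-c_m^{(j)}\ge 0$, so $s$ is the sum of the generator $2^jc_m^{(j)}+m$ and $q-c_m^{(j)}$ copies of $2^j$. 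In both cases $s$ belongs to the subsemigroup generated by the listed set, which completes the proof.

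The routine parts are the ramification bookkeeping of the first paragraph and the semigroup arithmetic of the third; the one delicate point is the odd-$m$ step of the induction, where one must control the pole order of the product $\tilde w_m^{(j-1)}x_j$. This is where the precise pole behaviour of $\tilde w_m^{(j-1)}$ at $P_\infty^{j-1}$ furnished by Theorem \ref{ThBasisLPinfxjres} is needed, together with the observation that membership in the second Maharaj summand $L([sP_\infty^j+(x_j)]_{|T_{j-1}})x_j\subseteq L(sP_\infty^j)$ forces the extraneous poles of $x_j$ to cancel, leaving $P_\infty^j$ as the sole pole.
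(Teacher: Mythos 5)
Your third paragraph (the generating-set argument via Theorem \ref{ThWeierLPinf}) and the even-$m$ induction step are fine, but the odd-$m$ step contains a genuine gap built on a false claim. You assert that Theorem \ref{ThBasisLPinfxjres} ``supplies'' that the only pole of $\tilde w_m^{(j-1)}$ in $T_{j-1}$ is $P_\infty^{j-1}$, with order exactly $2^{j-1}\tilde c_m^{(j-1)}+(m-1)/2$. It supplies neither. The theorem only says that the family $x_0^l\tilde w_m^{(j-1)}$ is a basis of $L([sP_\infty^j+(x_j)]_{|T_{j-1}})$, and by Proposition \ref{PropDivisorxj} and Remark \ref{RemResDiv} this restricted divisor has \emph{positive} coefficients at places other than $P_\infty^{j-1}$ (coming from the zero part $+2^{j-2}D^j_{j-2}$ of $(x_j)$), so $\tilde w_m^{(j-1)}$ is allowed --- and in general forced --- to have poles there. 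Concretely, take $j=2$, $m=3$: granting the corollary, $(w_3^{(2)})_\infty=3P_\infty^2$ and $w_3^{(2)}=\tilde w_3^{(1)}x_2$ with $v_{P_\infty^2}(x_2)=-1$ force $v_{P_\infty^1}(\tilde w_3^{(1)})=-1$; since $T_1$ has genus $1$, no function on $T_1$ has pole divisor of degree $1$, so $\tilde w_3^{(1)}$ must have a further pole (it lies over $D_0^1$, where the restriction of $(x_2)$ is positive). Moreover, even where your ``sole pole'' claim is harmless, the \emph{exact} order is unjustified: membership of $\tilde w_m^{(j-1)}$ in the Riemann--Roch space only yields $v_{P_\infty^{j-1}}(\tilde w_m^{(j-1)})\geq -\bigl(2^{j-1}\tilde c_m^{(j-1)}+(m-1)/2\bigr)$, i.e.\ only the upper bound $(w_m^{(j)})_\infty\leq (2^jc_m^{(j)}+m)P_\infty^j$; the reverse inequality requires a non-membership statement in a strictly smaller space, which your argument never produces.

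The missing non-membership is exactly the paper's (two-line) proof, applied directly at level $j$ so that no induction or cancellation bookkeeping is needed: set $s:=2^jc_m^{(j)}+m$; then $l_m(s)=0$ while $l_m(s-1)=-1$, and since $l_{m'}(s-1)\leq l_{m'}(s)$ for every $m'$, the Theorem \ref{ThBasisLPinf} basis of $L((s-1)P_\infty^j)$ is a subfamily of that of $L(sP_\infty^j)$ not containing $w_m^{(j)}$; linear independence of the larger family gives $w_m^{(j)}\in L(sP_\infty^j)\setminus L((s-1)P_\infty^j)$, which is precisely $(w_m^{(j)})_\infty=sP_\infty^j$. (The same two-values-of-$s$ device would also repair your odd-$m$ step: with $s_0:=2^j\tilde c_m^{(j-1)}+m$ odd, the divisors $A^{j-1}_1(s_0)$ and $A^{j-1}_1(s_0-1)$ differ exactly by $P_\infty^{j-1}$, so $\tilde w_m^{(j-1)}\in L(A^{j-1}_1(s_0))\setminus L(A^{j-1}_1(s_0-1))$ pins down $v_{P_\infty^{j-1}}$ exactly --- but then the whole induction is superfluous.) Your semigroup-generation paragraph coincides with the paper's, which likewise combines Theorem \ref{ThWeierLPinf} with $(x_0)_\infty=2^jP_\infty^j$.
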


\begin{proof}
Fix $m$ and define $s:=2^j c^{(j)}_m+m$. Since $l_m(s-1)=-1$ and $l_m(s)=0$ then
$w_m^{(j)}\in L(sP^j_\infty)\bs L((s-1)P_\infty^j)$.
The function
$x_0$ has a unique pole of order $2^j$ at $P^j_\infty$, then we are done by
Theorem \ref{ThWeierLPinf}. 
(We remark that this is not a minimal set of generators in general.) 
\end{proof}

Thanks to Theorem \ref{ThMaharaj}, the construction of a basis
of $L(sP^j_\infty)$ was reduced, in the proof of Theorem
\ref{ThBasisLPinf}, to the construction of a basis of $ L([sP_\infty^j+(x_j)]_{|T_{j-1}})$.
We state the main technical result of the paper.

\begin{theorem}\label{ThBasisLPinfxjres}
{\itshape There exists an algorithm  that, for all $j\geq 1$
and all odd $m$ with $0<m<2^{j}$,
 constructs integers $\tilde{c}_m^{(j-1)}$ and functions
$\tilde{w}_m^{(j-1)}\in T_{j-1}$ s.t. for
all $s\in\bb{Z}$ the family parametrized by $m$ and $l$:
$$ x_0^l\tilde{w}_m^{(j-1)}\qquad\qquad \mbox{ m odd }\quad 0< m< 2^j\qquad 0\leq l\leq
{\textstyle \left \lfloor \frac{s-m}{2^{j}}\right \rfloor} -\tilde{c}_m^{(j-1)}$$
is a basis of $L([sP_\infty^j+(x_j)]_{|T_{j-1}})$.}
\end{theorem}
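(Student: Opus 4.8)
The plan is to make the target divisor explicit and then descend through the tower by repeated use of Theorem \ref{ThMaharaj}. First I would compute $E:=[sP_\infty^j+(x_j)]_{|T_{j-1}}$ by inserting the formula for $(x_j)$ from Proposition \ref{PropDivisorxj} into $sP_\infty^j+(x_j)$ and applying Remark \ref{RemResDiv}. The result is a divisor of the standard shape $\alpha_\infty P_\infty^{j-1}+\sum_{r=-2}^{j-1}\alpha_r D^{j-1}_r$, with $\alpha_\infty=\lfloor(s-1)/2\rfloor$, with $\alpha_r=-1$ for the small values of $r$, with $\alpha_r=-2^{2r-j+2}$ on the middle range, and with a single positive coefficient $\alpha_{j-2}=2^{j-2}$ attached to $D^{j-1}_{j-2}$; crucially the coefficient of $P^{j-1}_{-1}=D^{j-1}_{j-1}$ vanishes, so by Remark \ref{RemInvDiv} the divisor $E$ is invariant in $T_{j-1}/T_{j-2}$. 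Hence Theorem \ref{ThMaharaj} applies and splits $L(E)=L(E_{|T_{j-2}})\oplus L([E+(x_{j-1})]_{|T_{j-2}})\,x_{j-1}$, reducing the problem one level down.

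The idea is then to iterate this splitting until each branch reaches, in the rational field $T_0$, a Riemann-Roch space of the form $L(nP_\infty^0)$ up to multiplication by the function accumulated along the branch; such a space is spanned by the evident powers of $x_0$. The $2^{j-1}$ leaves of the resulting binary recursion tree are indexed exactly by the odd residues $m$ with $0<m<2^j$, and each leaf contributes a one-parameter family $x_0^l\tilde w_m^{(j-1)}$, where $\tilde w_m^{(j-1)}$ is the product of the generators $x_k$ and correction factors $1+x_k$ accumulated along that branch, and $\tilde c_m^{(j-1)}$ records the constant offset produced by the nested floor operations. Tracking the pole order at $P_\infty^{j-1}$ along the recursion, as in Corollary \ref{Cor}, shows that these orders are pairwise distinct; this gives linear independence, while the direct-sum bookkeeping of Theorem \ref{ThMaharaj} gives the spanning property.

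The main obstacle is that the descent does not stay inside the invariant divisors. Applying Remark \ref{RemResDiv} to $E$ sends the positive term $2^{j-2}D^{j-1}_{j-2}$, whose support is the pair of unramified places $Q^{j-1}_{\pm i}$ over $P^{j-2}_{-1}$ (Lemma \ref{LemmaRamification}(e) and Corollary \ref{PropRamification}), to a nonzero coefficient $2^{j-2}$ at the single place $P^{j-2}_{-1}$; thus $E_{|T_{j-2}}$ is no longer invariant in $T_{j-2}/T_{j-3}$ and Theorem \ref{ThMaharaj} cannot be applied to it directly. This is precisely the situation treated in Sections \ref{SubsecBasisLPinfxjres1} and \ref{SubsecII}, and the key device is the function $1+x_{j-2}$: by Proposition \ref{PropDivisorxj} its divisor has a zero of order exactly $2^{j-2}$ at $P^{j-2}_{-1}$ and is otherwise of standard shape. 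Multiplication $z\mapsto z(1+x_{j-2})$ is therefore an isomorphism $L(E_{|T_{j-2}})\lmap{\sim}L\bigl(E_{|T_{j-2}}-(1+x_{j-2})\bigr)$ onto the Riemann-Roch space of a divisor whose coefficient at $P^{j-2}_{-1}$ has been cleared to zero, restoring invariance and allowing the recursion to continue.

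I expect the delicate part to be the bookkeeping that makes this correction loop terminate with the clean formula in the statement. At the first step the zero order $2^{j-2}$ of $1+x_{j-2}$ matches the non-invariant coefficient exactly; but after clearing it, the divisor $E_{|T_{j-2}}-(1+x_{j-2})$ acquires new positive coefficients at other $D^{j-2}_r$ (since $1+x_{j-2}$ has negative coefficients there), and descending further these can again restrict to nonzero coefficients at $P^{j-3}_{-1}$, and so on. One must check that at every stage the order required to clear the swapped place is an integer multiple of the available zero order of $1+x_\bullet$, so that a suitable power suffices, and control how the two ranges $r\le\lfloor(\cdot-3)/2\rfloor$ and $r\ge\lfloor(\cdot-1)/2\rfloor$ of Remark \ref{RemResDiv} shift as the level decreases. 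The low levels $j=1,2$ are handled directly from the special divisor formulas of Proposition \ref{PropDivisorxj}. Assembling the functions accumulated along each branch into $\tilde w_m^{(j-1)}$ and the offsets into $\tilde c_m^{(j-1)}$ then yields the asserted basis, which feeds the odd-$m$ part of the Hermitian basis in the proof of Theorem \ref{ThBasisLPinf}.
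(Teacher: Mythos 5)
You have correctly reconstructed the skeleton of the paper's argument: start from $A^{j-1}_1(s)=[sP_\infty^j+(x_j)]_{|T_{j-1}}$, which is invariant because the coefficient at $P^{j-1}_{-1}$ vanishes, descend by iterating Theorem \ref{ThMaharaj} along a binary tree whose $2^{j-1}$ leaves carry the odd residues modulo $2^j$, and restore invariance at each level by trading the coefficient at $P^k_{-1}$ against the principal divisor $(1+x_k)$, whose zero order at $P^k_{-1}$ is $2^k$. This is exactly Proposition \ref{PropUpDown}. But your proposal stands or falls on the divisibility check you defer to the end --- that ``the order required to clear the swapped place is an integer multiple of the available zero order of $1+x_\bullet$'' --- and that check fails. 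Concretely, take $j=4$ and follow the odd branch twice: $A^3_1(s)=\lfloor(s-1)/2\rfloor P^3_\infty-D^3_{-2}-D^3_{-1}-D^3_0-D^3_1+4D^3_2$ is invariant; clearing the coefficient $4$ at $P^2_{-1}$ of its restriction with one copy of $(1+x_2)$ yields $B^2_1(s)=\left(\lfloor(s+3)/4\rfloor\right)P^2_\infty-D^2_0-D^2_1$; restricting once more gives $A^1_1(s)=\lfloor(s+3)/8\rfloor P^1_\infty-D^1_0-P^1_{-1}$, with coefficient $-1$ at $P^1_{-1}$, while $v_{P^1_{-1}}(1+x_1)=2$. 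No integer multiple or power of $1+x_1$ clears an odd coefficient, so your correction loop jams. This is precisely why the paper performs division with remainder, $-\alpha^k_n=\gamma^k_n2^k+\delta^k_n$ with $0\leq\delta^k_n<2^k$ (Proposition \ref{PropUpDown}$(v)$), and why the real technical core of the proof is the treatment of the leftover space $L(B(s)-\delta P)$ in Lemma \ref{LemmaInductionEpsilon}: one peels off $P=P^k_{-1}$ one unit $\varepsilon$ at a time, using the local parameter $t=1-x_0$ and the power series expansions of Remark \ref{RemPower} to run a Gaussian-elimination step on the basis (replacing $z_{m,\varepsilon}$ by $z_{m,\varepsilon}-\frac{z_{m,\varepsilon}(P)}{z_{\sigma(I),\varepsilon}(P)}z_{\sigma(I),\varepsilon}$, and multiplying the pivot by $1-x_0$), with Lemma \ref{LemBasisD-P} verifying that the modified family is again a basis. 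None of this machinery, nor any substitute for it, appears in your sketch, so the proposal has a genuine gap at the central difficulty.

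Two further points. First, uniformity in $s$ is not cosmetic: the statement demands a single pair $(\tilde{c}_m^{(j-1)},\tilde{w}_m^{(j-1)})$ valid for all $s\in\bb{Z}$ simultaneously, so the pivot index $I$ in the elimination must be chosen independently of $s$; this is exactly what the $s$-independent ordering of the bounds $l_m(s)$ in Lemma \ref{LemmaSigma} guarantees, and your sketch has no counterpart. Second, your independence argument via pairwise distinct pole orders at $P_\infty^{j-1}$ inverts the paper's logic: the Hermitian property is deduced afterwards (Corollary \ref{Cor}), whereas inside the recursion independence comes from the direct sums of Theorem \ref{ThMaharaj} and from the explicit linear-independence verification in Lemma \ref{LemBasisD-P} --- after the substitutions $z\mapsto z-c\,z_{\sigma(I)}$ and $z_{\sigma(I)}\mapsto z_{\sigma(I)}(1-x_0)$, distinctness of pole orders is no longer evident and would itself require proof.
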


\begin{proof} The result is a direct consequence of Proposition \ref{PropDownUp}.
Item $(ii)$ applied with $k=j-1$ gives a basis of 
$ L(A^{j-1}_1(s)):= L([sP_\infty^j+(x_j)]_{|T_{j-1}}).$
Thanks to item $(i)$ we can reorder the families
in such a way  to get the basis in the stated form.

\end{proof}


\subsection{Basis of $L([sP_\infty^j+(x_j)]_{|T_{j-1}})$: first part.}
\label{SubsecBasisLPinfxjres1}

This subsection and the next one are dedicated to the proof of
Theorem \ref{ThBasisLPinfxjres}. We fix once and for all an integer $j\geq 1$.
All of what will be defined in the sequel will depend on $j$ but
we will not indicate this fact in the notation.

In this subsection we will construct
divisors $A^{k}_n(s)$ and $B^k_n(s)$ at level $k\leq j-1$.
 In the next subsection suitable bases of the Riemann-Roch spaces of
these divisors will be constructed.

\begin{proposition}\label{PropUpDown}
{\itshape There is an algorithm to construct
divisors $A^{k}_n(s)$ and $B^k_n(s)$, $s\in\bb{Z}$,
and integers $ a_n^k$, $b_n^k$, $\alpha_n^k$, $\gamma_n^k$, and $\delta_n^k $,
labeled by integers $k$ and $n$ s.t. $  0\leq k\leq j-1$ and $1\leq n\leq 2^{j-k-1} $,
in such a way that the following conditions are satisfied.
\begin{enumerate}[(i)]
\item $A^{j-1}_1(s)=[sP_\infty^j+(x_j)]_{|T_{j-1}}$.
\item The divisors $A^k_n(s)$ and $B^k_n(s)$
have the form:
$$ \begin{array}{lll}
(a)\quad &A^{k}_n(s)=&
{\textstyle
\left\lfloor\frac{s+a^{k}_n}{2^{j-k}}\right\rfloor}P_\infty^{k}
+\sum_{m=-2}^{k-1}
\alpha_{n,m}^{k}D_m^{k}+\alpha_n^{k}P_{-1}^{k}\\\noalign{\bigskip}
(b)\quad &B^{k}_n(s)=&
{\textstyle
\left\lfloor\frac{s+b^{k}_n}{2^{j-k}}\right\rfloor}P_\infty^{k}
+\sum_{m=-2}^{k-1}
\beta_{n,m}^{k}D_m^{k}.
\end{array}$$
\item $ B^{k}_n(s)= A^{k}_{n}(s)
+\gamma^{k}_{n}(1+x_{k})
+\delta^{k}_{n}P^{k}_{-1}$.
\item If $k<j-1$, and  $\lceil\cdot\rceil$ is the roof function, then:
$$ A^{k}_n(s)=\left\{
\begin{array}{ll}
{\textstyle [B^{k+1}_{\lceil n/2 \rceil}(s)]_{|T_{k}}}
&\mbox{ if }n\mbox{ is odd}\\\noalign{\medskip}
{\textstyle [B^{k+1}_{\lceil n/2 \rceil}(s)+(x_{k+1})]_{|T_{k}}}
&\mbox{ if }n\mbox{ is even.}
\end{array}\right.$$
\item $ -\alpha^k_n=\gamma_n^k2^k+\delta_n^k$ and $ 0\leq \delta^k_n<2^k$.
\item For $k$ fixed,
all the odd integers modulo
$2^{j-k}$ appear exactly once: (a) in the sequence of the coefficients $a^{k}_n$, as $n$ varies, and
(b) in the sequence of the coefficients $b^k_n$, as $n$ varies.
\end{enumerate}
}
\end{proposition}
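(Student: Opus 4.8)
The plan is to read conditions (i), (iii), (iv), (v) as the \emph{specification} of the algorithm and then to \emph{prove} that (ii) and (vi) are automatically satisfied; the natural framework is a descending induction on $k$, from $k=j-1$ down to $k=0$. The base of the recursion is (i): at the top level $k=j-1$ there is a single index $n=1$ and $A^{j-1}_1(s)$ is given outright. At a general level I would first build $A^k_n(s)$ from the divisors $B^{k+1}_{\lceil n/2\rceil}(s)$ constructed one level up, using (iv); then read off the coefficient $\alpha^k_n$ of $P^k_{-1}$ in $A^k_n(s)$; then let (v) \emph{define} $\gamma^k_n$ and $\delta^k_n$ as the quotient and remainder of the Euclidean division of $-\alpha^k_n$ by $2^k$; and finally define $B^k_n(s)$ by (iii). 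For the restrictions in (iv) to be legitimate I first note that each $B^{k+1}_N(s)$ has no $P^{k+1}_{-1}$-term (it has the shape (ii)(b)), hence is invariant in $T_{k+1}/T_k$ by Remark~\ref{RemInvDiv}, and that $(x_{k+1})$ is supported away from $P^{k+1}_{-1}$ by Proposition~\ref{PropDivisorxj}; thus $B^{k+1}_N(s)$ and $B^{k+1}_N(s)+(x_{k+1})$ are both invariant and Remark~\ref{RemResDiv} applies in either parity of $n$.

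The substance of (ii) is to show that the coefficient of $P^k_\infty$ keeps the floor shape $\lfloor (s+a^k_n)/2^{j-k}\rfloor$ for an integer $a^k_n$ independent of $s$, and that every remaining coefficient is an $s$-independent integer. The latter is immediate from Remark~\ref{RemResDiv} and Proposition~\ref{PropDivisorxj}, since restriction only replaces certain coefficients by the floor of their halves and $(x_{k+1})$, $(1+x_k)$ have fixed integer coefficients on the relevant places; in particular $\alpha^k_n$ is an $s$-independent integer, which is exactly what (v) requires. For the $P_\infty$-coefficient I would use the nested-floor identity $\lfloor \lfloor x/2^{j-k-1}\rfloor/2\rfloor=\lfloor x/2^{j-k}\rfloor$ together with the facts that both $x_{k+1}$ and $1+x_k$ have $P_\infty$-coefficient $-1$ and that $1+x_k$ has $P^k_{-1}$-coefficient $2^k$ (Proposition~\ref{PropDivisorxj}). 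A short computation then yields $a^k_n=b^{k+1}_{\lceil n/2\rceil}$ for $n$ odd, $a^k_n=b^{k+1}_{\lceil n/2\rceil}-2^{j-k-1}$ for $n$ even, and $b^k_n=a^k_n-2^{j-k}\gamma^k_n$; here one uses $\lfloor x/m\rfloor-1=\lfloor (x-m)/m\rfloor$ to absorb the $-1$ coming from $(x_{k+1})$ before restricting. Simultaneously (iii) and (v) give that the $P^k_{-1}$-coefficient of $B^k_n$ equals $\alpha^k_n+\gamma^k_n2^k+\delta^k_n=0$, confirming that $B^k_n$ has the invariant shape (ii)(b).

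The combinatorial claim (vi) then follows cleanly from these recursions. In the base case $k=j-1$ one checks $a^{j-1}_1=-1$ and $b^{j-1}_1=-1-2\gamma^{j-1}_1$, both odd, so they represent the single odd class modulo $2$. For the inductive step, the two indices $n=2N-1$ and $n=2N$ both have $\lceil n/2\rceil=N$ and give $a^k_{2N-1}=b^{k+1}_N$ and $a^k_{2N}=b^{k+1}_N-2^{j-k-1}$. Reducing modulo $2^{j-k}$ and using $-2^{j-k-1}\equiv 2^{j-k-1}\pmod{2^{j-k}}$, this pair is exactly the two odd lifts $\{b^{k+1}_N,\,b^{k+1}_N+2^{j-k-1}\}$ of $b^{k+1}_N \bmod 2^{j-k-1}$. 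Since by induction the $b^{k+1}_N$ run once through the odd residues modulo $2^{j-k-1}$, the $a^k_n$ run once through the odd residues modulo $2^{j-k}$, giving (vi)(a); and since $b^k_n\equiv a^k_n\pmod{2^{j-k}}$, the same holds for the $b^k_n$, giving (vi)(b).

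I expect the main obstacle to be the bookkeeping in (ii) rather than any single hard idea: one must carry the floor shape of the $P_\infty$-coefficient through two successive operations (a restriction and, for even $n$, a twist by $(x_{k+1})$) and simultaneously check that none of the $D^k_m$-coefficients acquire an $s$-dependence, all while handling the boundary forms of Proposition~\ref{PropDivisorxj} at $k=0,1$ (and the degenerate case $j=1$, where only the base level occurs). By contrast, once the offsets $a^k_n$ and $b^k_n$ are pinned down, the residue count in (vi) is essentially a doubling argument and should go through routinely.
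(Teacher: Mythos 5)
Your proposal is correct and takes essentially the same route as the paper's own proof: a descending induction on $k$ in which (i), (iii), (iv), (v) serve as the definitions, $\gamma^k_n$ and $\delta^k_n$ come from Euclidean division of $-\alpha^k_n$ by $2^k$, the offsets are exactly the paper's $a^k_n=b^{k+1}_{\lceil n/2\rceil}$ for $n$ odd, $a^k_n=b^{k+1}_{\lceil n/2\rceil}-2^{j-k-1}$ for $n$ even, and $b^k_n=a^k_n-2^{j-k}\gamma^k_n$, with (vi) proved by the same doubling/lifting count of odd residues. The only differences are organizational (the paper separates each level into an $A^k\Rightarrow B^k$ step and a $B^{k+1}\Rightarrow A^k$ step, while you merge them and add the explicit invariance check justifying the restrictions), which does not change the argument.
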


\begin{proof} The proof is by descending induction on $k$, and it is
given in three steps. In step 1 we define the divisors $A^{j-1}$. In step 2, for any $k$,
we
define the divisors $B^k$ given the divisors $A^k$. In step 3,
for any $k<j-1$, we define the divisors $A^k$ given the divisors $B^{k+1}$.\\

\noindent
\texttt{Step 1.} For $k=j-1$ 
define, for all $s\in\bb{Z}$, $ A^{j-1}_1(s):=[sP_\infty^j+(x_j)]_{|T_{j-1}}$,
so that $(i)$ is satisfied by definition. Define the
coefficients $a_1^{j-1}:=-1$ and $\alpha^{j-1}_1:=0$.
Since the divisor $(x_j)$ at level $j$
has coefficient $-1$ at $P^j_\infty$, see  
Proposition \ref{PropDivisorxj}, then
$(ii,a)$ is satisfied.
Moreover, $(vi,a)$ is trivially satisfied.
\\

\noindent
\texttt{Step 2.} Fix $k$ with $0\leq k\leq j-1$,
and assume to have defined divisors $A^k_n(s)$
and integers $a_n^k$ and $\alpha^k_n$ s.t.
$(ii,a)$ and $(vi,a)$ are satisfied for the given
$k$. Define $\gamma^k_n$ and $\delta^k_n$ by dividing $-\alpha^k_n$ by
$2^k$, so that $(v)$ is satisfied for the given $k$ by definition.
Define $B^k_n(s)$ by $(iii)$, so that $(iii)$ is satisfied for the given
$k$ by definition.
We will define coefficients $b_n^k$ such that $(ii,b)$ and $(vi,b)$ are
 satisfied for the given $k$.

By the form of the divisor $(1+x_k)$, see 
Proposition \ref{PropDivisorxj}, item $(ii,b)$
 follows by
taking care of the coefficients of $B^k_n(s)$ at $P^k_\infty$ and $P^k_{-1}$.
Since the coefficient of $(1+x_k)$ at $P^k_{-1}$ is $2^k$ then
by $(v)$ it is easily seen that
the coefficient of $B^k_n(s)$ at $P^k_{-1}$ is zero as desired.
Since the coefficient of $(1+x_k)$ at $P^k_{\infty}$ is $-1$ then
the coefficient of $B^k_n(s)$ at $P^k_{\infty}$ is given by 
$\lfloor (s+a^{k}_n-2^{j-k}\gamma_n^k)/2^{j-k}\rfloor$.
We define $ b_n^k:=a^{k}_n-2^{j-k}\gamma_n^k$
and this concludes with items $(ii,b)$ and $(vi,b)$ for the given $k$,
the latter because
$b_n^k=a_n^k$ mod $2^{j-k}$.
\\

\noindent
\texttt{Step 3.}
Fix $k$ with $0\leq k<j-1$, and assume to have defined divisors $B^{k+1}_n(s)$
and integers $b_n^{k+1}$ s.t.
$(ii,b)$ and $(vi,b)$ are satisfied for $k+1$ in place of $k$.
We will define divisors
$A^k_n(s)$ and integers $a_n^k$ and $\alpha^k_n$
s.t. $(ii,a)$ and $(vi, a)$ are satisfied
for $k$.

Define $A^k_n(s)$ by $(iv)$, so that $(iv)$ is satisfied for $k$.
Let:
$$ a^{k}_n:=\left\{
\begin{array}{ll}
{\textstyle b^{k+1}_{\lceil n/2 \rceil}}
&\mbox{ if }n\mbox{ is odd}\\\noalign{\medskip}
{\textstyle b^{k+1}_{\lceil n/2 \rceil}}-2^{j-k-1}
&\mbox{ if }n\mbox{ is even.}
\end{array}\right.$$
By Proposition \ref{PropDivisorxj},
the coefficient of the divisor $(x_{k+1})$ at $P^{k+1}_{\infty}$ is equal to
$-1$, then $(ii,a)$ is satisfied for $k$. In order to prove $(vi,a)$ we argue as follows. 
For $n=1,...,2^{j-k-1}$ with
$n$ odd, the number $\lceil n/2 \rceil$ takes all the values $1,...,2^{j-(k+1)-1}$
exactly once. The same is true taking the even values of $n$.
It follows that the set $\{a^k_n\,|\,n=1,...,2^{j-k}\}$
coincides with the set: $\{b^{k+1}_n, b^{k+1}_n-2^{j-k-1}\,|\,
n=1,...,2^{j-(k+1)-1}\} $. Since $(vi,b)$ holds with $k+1$ in place of $k$ by
hypothesis, then the $b^{k+1}_n$'s take
exactly once all the
odd residues modulo $2^{j-(k+1)}$. The conclusion $(vi,a)$ for the given $k$
follows.
\end{proof}

The next corollary
will be used in the next subsection to construct bases of the divisors
given in the above proposition.

\begin{corollary}\label{CorBfromA}
{\itshape
For all integers $k,n,s$ with $0\leq k< j-1$ and $1\leq n\leq2^{j-k-2}$
we have:
$$ L(B^{k+1}_{n}(s))=L(A^{k}_{2n-1}(s))\oplus
L(A^{k}_{2n}(s))x_{k+1}.$$
}
\end{corollary}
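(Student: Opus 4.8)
The plan is to recognize the claimed decomposition as a direct instance of Maharaj's Theorem \ref{ThMaharaj}, applied one level down at the Kummer extension $T_{k+1}/T_k$ with the divisor $D:=B^{k+1}_n(s)$. The three ingredients I would assemble are: that $D$ is invariant in $T_{k+1}/T_k$, so that the theorem applies; the explicit identification of the two restricted divisors produced by the theorem with $A^k_{2n-1}(s)$ and $A^k_{2n}(s)$; and a check that the index ranges match up.

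First I would verify invariance. By item $(ii,b)$ of Proposition \ref{PropUpDown}, the divisor $B^{k+1}_n(s)$ has the shape $\lfloor (s+b^{k+1}_n)/2^{j-k-1}\rfloor P^{k+1}_\infty+\sum_{m=-2}^{k}\beta^{k+1}_{n,m}D^{k+1}_m$; in particular, its coefficient at $P^{k+1}_{-1}$ is zero. By Remark \ref{RemInvDiv}, read at level $k+1$, this is exactly the condition for $B^{k+1}_n(s)$ to be invariant under the Galois group of $T_{k+1}/T_k$. Hence Theorem \ref{ThMaharaj}, applied with $j$ replaced by $k+1$, yields
$$L(B^{k+1}_n(s))=L\bigl([B^{k+1}_n(s)]_{|T_k}\bigr)\oplus L\bigl([B^{k+1}_n(s)+(x_{k+1})]_{|T_k}\bigr)\,x_{k+1}.$$

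Next I would identify the two summands using item $(iv)$ of the same proposition. Taking the odd index $2n-1$ gives $\lceil(2n-1)/2\rceil=n$, so $A^k_{2n-1}(s)=[B^{k+1}_n(s)]_{|T_k}$; taking the even index $2n$ gives $\lceil 2n/2\rceil=n$, so $A^k_{2n}(s)=[B^{k+1}_n(s)+(x_{k+1})]_{|T_k}$. Substituting these two identities into the displayed decomposition immediately produces $L(B^{k+1}_n(s))=L(A^k_{2n-1}(s))\oplus L(A^k_{2n}(s))\,x_{k+1}$, as claimed. Finally, the hypothesis $1\leq n\leq 2^{j-k-2}$ is precisely the range in which $B^{k+1}_n(s)$ is defined, namely $1\leq n\leq 2^{j-(k+1)-1}$, and it guarantees that both $2n-1$ and $2n$ lie in $\{1,\dots,2^{j-k-1}\}$, the range of validity for the $A^k$-divisors.

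The only genuine point to get right is the invariance in the first step: without it Maharaj's theorem does not apply, and indeed the whole construction of the $B$-divisors in Proposition \ref{PropUpDown}, via the twist by $(1+x_k)$ governed by item $(iii)$ and the division in item $(v)$, exists precisely to annihilate the $P^{k+1}_{-1}$-coefficient and thereby restore invariance. Everything else is bookkeeping with the ceiling function and the index ranges, so I expect no serious obstacle beyond carefully citing the correct items of Proposition \ref{PropUpDown}.
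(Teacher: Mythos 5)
Your proof is correct and follows essentially the same route as the paper's own argument: invariance of $B^{k+1}_n(s)$ via item $(ii,b)$ of Proposition \ref{PropUpDown} (with $k+1$ in place of $k$), then Theorem \ref{ThMaharaj} applied to the Kummer extension $T_{k+1}/T_k$, with the two summands identified through item $(iv)$. The extra details you supply --- citing Remark \ref{RemInvDiv}, computing $\lceil(2n-1)/2\rceil=\lceil 2n/2\rceil=n$, and checking the index ranges --- are exactly the bookkeeping the paper leaves implicit.
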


\begin{proof} It follows by item $(ii,b)$
of Proposition \ref{PropUpDown} (applied with $k+1$ in place of $k$)  that
$B^{k+1}_n(s)$ is invariant in $T_{k+1}/T_{k}$. The thesis of
the corollary follows by
Theorem \ref{ThMaharaj} and item $(iv)$ of Proposition \ref{PropUpDown}.
\end{proof}

\subsection{Basis of $L([sP_\infty^j+(x_j)]_{|T_{j-1}})$: second part.}\label{SubsecII}

In this subsection we will complete the proof of Theorem \ref{ThBasisLPinfxjres}
by computing bases of the Riemann-Roch spaces
$L(A^{k}_n(s))$ and $L(B^{k}_n(s))$ of the divisors constructed in
Subsection \ref{SubsecBasisLPinfxjres1}. The integer $ j\geq 1$
will be fixed once and for all.
\\

\begin{proposition}\label{PropDownUp}
{\itshape There is an algorithm to construct
functions $ w_m^k$, $z_m^k\in T_k$
and integers $ b_m$, $c_m^k$, $d_m^k$
labeled by integers $k$ and $m$ s.t. $  0\leq k\leq j-1$ and $1\leq m\leq 2^{j-1} $,
in such a way that the following conditions are satisfied.
\begin{enumerate}[(i)]
\item For all $m$,  $ 0\leq b_m<2^j$.
Moreover, as $m$ varies, the coefficients $b_m$
take exactly once all the odd values modulo $2^j$.
\item For all $k$, for all $n$ with $1\leq n\leq 2^{j-k-1}$,
and for all $s\in\bb{Z}$, the family parametrized
by $m$ and $l$:
$$ x_0^lw_m^k\qquad\qquad (n-1)2^k+1\leq m\leq n2^k\qquad
0\leq l\leq
{\textstyle \left \lfloor \frac{s-b_m}{2^{j}}\right \rfloor}
-c_m^k$$
is a basis of $L(A^{k}_n(s))$.
\item For all $k$, for all $n$ with $1\leq n\leq 2^{j-k-1}$,
and for all $s\in\bb{Z}$, the family parametrized
by $m$ and $l$:
$$ x_0^lz_m^k\qquad\qquad (n-1)2^k+1\leq m\leq n2^k\qquad
0\leq l\leq
{\textstyle \left \lfloor \frac{s-b_m}{2^{j}}\right \rfloor}
-d_m^k$$
is a basis of $L(B^{k}_n(s))$.
\end{enumerate}
}
\end{proposition}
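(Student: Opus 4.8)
The plan is to prove $(i)$, $(ii)$ and $(iii)$ simultaneously by induction on $k$, constructing at each level both the functions $w^k_m$ (which will serve the divisors $A^k_n$) and $z^k_m$ (which will serve the $B^k_n$), together with the accompanying integers. I fix the integers $b_m$ once and for all at the bottom level, setting $b_m:=b^0_m$; then $(i)$ is immediate from item $(vi,b)$ of Proposition \ref{PropUpDown} at $k=0$, which says the $b^0_m$ run exactly once through the odd residues modulo $2^{j}$ (chosen with $0\le b_m<2^j$). The inductive passage from level $k$ to level $k+1$ has two halves: an \emph{across-level} half producing the $z^{k+1}_m$ from the $w^k_m$, and a \emph{within-level} half producing the $w^{k+1}_m$ from the $z^{k+1}_m$.

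For the base case $k=0$ I would work directly in $T_0=K(x_0)$, where $g(T_0)=0$. Since $2^0=1$, each $n$ carries a single index $m=n$, so a basis of $L(A^0_n(s))$ must be a single chain $x_0^lw^0_n$. I would take $w^0_n$ to be the explicit monomial in $x_0$, $x_0^2+1$ and $1+x_0$ whose divisor cancels the finite part of $A^0_n(s)$ at $P^0_0$, $P^0_{\pm i}$, $P^0_{-1}$, corrected by a power of $x_0$ to set the base pole order at $P^0_\infty$; routine bookkeeping in genus zero, using $\dim L(D)=\deg D+1$ for $\deg D\ge 0$ and the fact that the coefficient of $A^0_n(s)$ at $P^0_\infty$ grows by one when $s$ grows by $2^j$, shows that this chain exhausts $L(A^0_n(s))$ and fixes $c^0_n$. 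For $z^0_n$ I would use item $(iii)$ of Proposition \ref{PropUpDown}: as $0\le\delta^0_n<2^0=1$ forces $\delta^0_n=0$, multiplication by $(1+x_0)^{-\gamma^0_n}$ is an isomorphism $L(A^0_n(s))\cong L(B^0_n(s))$, so I set $z^0_n:=w^0_n(1+x_0)^{-\gamma^0_n}$ and read off $d^0_n$.

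For the across-level half I would apply Corollary \ref{CorBfromA}, namely $L(B^{k+1}_n(s))=L(A^k_{2n-1}(s))\oplus L(A^k_{2n}(s))x_{k+1}$, feeding in the level-$k$ bases supplied by the induction hypothesis $(ii)$. Merging the two index blocks, I define $z^{k+1}_m:=w^k_m$ when $\lceil m/2^k\rceil$ is odd and $z^{k+1}_m:=w^k_m\,x_{k+1}$ when it is even, and $d^{k+1}_m:=c^k_m$; since multiplication by $x_{k+1}$ preserves the range of $l$, this gives $(iii)$ at level $k+1$. For the within-level half I would rewrite item $(iii)$ of Proposition \ref{PropUpDown} as $A^{k+1}_n=B^{k+1}_n-\gamma^{k+1}_n(1+x_{k+1})-\delta^{k+1}_nP^{k+1}_{-1}$. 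Multiplying the just-constructed $z$-basis of $L(B^{k+1}_n(s))$ by $(1+x_{k+1})^{\gamma^{k+1}_n}$ then yields a basis of $L(A^{k+1}_n(s)+\delta^{k+1}_nP^{k+1}_{-1})$, a space that contains the target $L(A^{k+1}_n(s))$, the two differing only by the $\delta^{k+1}_n$ extra pole orders permitted at the degree-one place $P^{k+1}_{-1}$.

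The main obstacle is this final trimming: passing from a basis of $L(A^{k+1}_n(s)+\delta^{k+1}_nP^{k+1}_{-1})$ to one of $L(A^{k+1}_n(s))$ while retaining the rigid shape $x_0^lw^{k+1}_m$ and correctly distributing the lost dimensions among the $2^{k+1}$ chains, that is, fixing the new $c^{k+1}_m$. Here the expansions of Remark \ref{RemPower} are indispensable. Writing $\tilde z_m:=z^{k+1}_m(1+x_{k+1})^{\gamma^{k+1}_n}$, and noting that $x_0\equiv 1$ at $P^{k+1}_{-1}$ makes the pole order of $x_0^l\tilde z_m$ at $P^{k+1}_{-1}$ independent of $l$, I would expand each $\tilde z_m$ around $P^{k+1}_{-1}$ in the local parameter $t=1-x_0$ and cancel the forbidden Laurent coefficients --- those of $t$-orders $-(\alpha^{k+1}_n+\delta^{k+1}_n),\dots,-\alpha^{k+1}_n-1$ --- by $K$-linear combinations within the family $\{x_0^l\tilde z_m\}$. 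The crux is to verify, from the explicitly computable leading coefficients of the expansions of $x_0,\dots,x_{k+1}$, that at each such order enough chains carry independent leading terms for the cancellation to succeed; granting this, the elimination removes exactly $\delta^{k+1}_n$ dimensions, the surviving combinations regroup into $2^{k+1}$ chains $x_0^lw^{k+1}_m$, and reading off their lengths yields the $c^{k+1}_m$ and establishes $(ii)$ at level $k+1$, closing the induction.
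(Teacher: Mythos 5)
Your overall architecture does match the paper's three-step induction: your base case at $k=0$ (where $0\le\delta^0_n<2^0$ forces $\delta^0_n=0$, so no trimming is needed) corresponds to the paper's Step~1, your across-level half via Corollary \ref{CorBfromA} is the paper's Step~3 verbatim, and your within-level half correctly reduces the problem, via multiplication by $(1+x_{k+1})^{\gamma^{k+1}_n}$ and item $(iii)$ of Proposition \ref{PropUpDown}, to removing $\delta^{k+1}_n$ orders at the rational place $P=P^{k+1}_{-1}$, using expansions in $t=1-x_0$ as in Remark \ref{RemPower}. (One small slip: you need $b_m\equiv -b^0_m\pmod{2^j}$, as the paper arranges via $-b^0_m=q_m2^j+b_m$, because item $(ii,b)$ produces bounds of the shape $\lfloor (s+b^0_m)/2^j\rfloor$; with your choice $b_m:=b^0_m\bmod 2^j$ the stated form $\lfloor (s-b_m)/2^j\rfloor-d^k_m$ is not obtained. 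Harmless, since negation permutes the odd residues.)

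The genuine gap is exactly where you write ``granting this'': the trimming step is the technical heart of the proof (the paper's Lemmas \ref{LemmaInductionEpsilon}, \ref{LemmaSigma} and \ref{LemBasisD-P}), and your proposed mechanism does not deliver it. First, cancelling all $\delta^{k+1}_n$ forbidden $t$-coefficients at once by linear algebra has no reason to preserve the rigid, $s$-uniform chain structure: the output must consist of $s$-independent functions $w_m$ whose chains $x_0^lw_m$, $0\le l\le\lfloor(s-b_m)/2^j\rfloor-c_m$, are bases for \emph{every} $s\in\bb{Z}$ simultaneously, and a generic elimination may subtract a short chain from a long one; then for suitable $s$ the element $x_0^l(z_i-c\,z_{i'})$ with $l>l_{i'}(s)$ picks up a pole at $P^{k+1}_\infty$ exceeding the divisor coefficient and falls outside the space. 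The paper avoids this by peeling off one order at a time ($\varepsilon\to\varepsilon+1$): Lemma \ref{LemmaSigma} gives an $s$-independent total order on the $2^{k+1}$ chains (lexicographic in $(d_{m,\varepsilon},b_m)$), and at each $\varepsilon$ one modifies only the \emph{maximal} chain $I$ among those with $v_P(z_{m,\varepsilon})=\varepsilon$, replacing $z_{\sigma(I),\varepsilon}$ by $z_{\sigma(I),\varepsilon}(1-x_0)$ (bound decremented by one) and correcting the others by multiples of it; maximality guarantees $l_i(s)\le l_I(s)$ for all corrected chains and all $s$, and also settles the degenerate values of $s$ with $L(D-P)=L(D)$, where Lemma \ref{LemBasisD-P} checks the family is literally unchanged on its contributing part --- a uniformity issue your one-shot elimination never confronts. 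Second, your ``crux'' --- that enough chains carry independent leading terms, to be ``verified from the explicitly computable leading coefficients'' --- is both stronger than needed and not provable by such a finite inspection: at each step only \emph{one} nonvanishing order-$\varepsilon$ coefficient is required, and the paper proves its existence abstractly, from the validity of the inductive family for all $s$ together with Riemann--Roch for $s\gg0$ (since $\deg B(s)\to\infty$, $L(B(s)-\varepsilon P-P)\varsubsetneq L(B(s)-\varepsilon P)$ for large $s$, so not all $z_{m,\varepsilon}(P)$ can vanish). Your correct observation that $x_0\equiv1$ at $P$ makes the relevant coefficient of $x_0^lz$ independent of $l$ is indeed used in the paper's argument, but by itself it does not close this hole; as written, your proof is conditional on an unproved (and, in its stated form, unnecessary and unverifiable) independence hypothesis.
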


\begin{proof}  The proof is by induction on $k$, and it is
given in three steps. In step 1 we construct bases of the spaces $L(B^0)$.
In step 2, for any $k$,
we
construct bases of the spaces $L(A^k)$ given bases of the spaces
$L(B^k)$. In step 3,
for any $k<j-1$, we construct bases of the spaces $L(B^{k+1})$
given bases of the spaces $L(A^{k})$.\\

\noindent
\texttt{Step 1.} 
For $k=0$, and $n=1,...,2^{j-1}$, and $s\in\bb{Z}$ we have,
by item $(ii,b)$ of Proposition \ref{PropUpDown}:
$$ B^{0}_n(s)=
{\textstyle
\left\lfloor\frac{s+b^{0}_n}{2^j}\right\rfloor}P_\infty^{0}
+\beta_{n,-2}^0P_0^0+\beta_{n,-1}^0D_{-1}^0.$$
Define, for $m=1,...,2^{j-1}$, $ z^0_m:= x_0^{-\beta_{m,-2}^0}(1+x_0^2)^{-\beta_{m,-1}^0}$.
Define $b_m$ by a division by $2^j$, that is: $-b^0_m=q_m2^j+b_m$ and $ 0\leq b_m<2^j$,
and define  $d^0_m:=q_m-\beta_{m,-2}^0-2\beta_{m,-1}^0$.
Then item $(i)$ is satisfied thanks to
item $(vi,b)$ of Proposition \ref{PropUpDown} for $k=0$.
Since $T_0$ is the rational function field, then
 the family parametrized by $l$:
$$x_0^lz_m^0=x_0^{l-\beta_{m,-2}^0}(1+x_0^2)^{-\beta_{m,-1}^0}
\qquad 0\leq l\leq
{\scriptstyle \left \lfloor \frac{s+b_m^0}{2^{j}}\right \rfloor}
+\beta_{m,-2}^0+2\beta_{m,-1}^0
={\textstyle \left \lfloor \frac{s-b_m}{2^{j}}\right \rfloor}
-d_m^0$$
is a basis of $L(B^{0}_n(s))$. Then $(iii)$ is satisfied for $k=0$.\\

\noindent
\texttt{Step 2.} Fix $k$ with $0\leq k\leq j-1$.
Assume that, for $m=1,...,2^{j-1}$, the elements
$d^{k}_m$ and $z^{k}_m$ have already been defined
in such a way that item $(iii)$ is satisfied for the given $k$. We will define,
for $m=1,...,2^{j-1}$, elements
$c^{k}_m$ and $w^{k}_m$
in such a way that item $(ii)$ is satisfied for $k$.

Fix $n$ with $1\leq n\leq 2^{j-k-1}$.
For any $s\in\bb{Z}$, let $C^k_n(s):=A^k_n(s)+\gamma^k_n(1+x_k)$.
Define $c_m^k:=d_{m,\delta}$ and $ \tilde{w}_n^k:=z_{m,\delta}$, for
$m=(n-1)2^k+1,..., n2^k$,
where $d_{m,\delta}$ and $z_{m,\delta}$ are constructed in
Lemma \ref{LemmaInductionEpsilon}.
Then, for any $s\in\bb{Z}$ the family parametrized
by $m$ and $l$:
$$ x_0^l\tilde{w}_m^k\qquad\qquad (n-1)2^k+1\leq m\leq n2^k\qquad
0\leq l\leq
{\textstyle \left \lfloor \frac{s-b_m}{2^{j}}\right \rfloor}
-c_m^k$$
is a basis of $L(C^{k}_n(s))$. Then, defining $w^k_m:=\tilde{w}^k_m(1+x_k)^{\gamma^k_n}$,
the family parametrized
by $m$ and $l$:
$$ x_0^lw_m^k\qquad\qquad (n-1)2^k+1\leq m\leq n2^k\qquad
0\leq l\leq
{\textstyle \left \lfloor \frac{s-b_m}{2^{j}}\right \rfloor}
-c_m^k$$
is a basis of $L(A^{k}_n(s))$ for any $s\in\bb{Z}$, as desired.
\\

\noindent
\texttt{Step 3.} Fix $k$ with  $0\leq k<j-1$.
Assume that, for $m=1,...,2^{j-1}$, the elements
$c^{k}_m$ and $w^{k}_m$ have already been defined
in such a way that item $(ii)$ is satisfied for the given $k$. We will define,
for $m=1,...,2^{j-1}$, elements
$d^{k+1}_m$ and $z^{k+1}_m$
in such a way that item $(iii)$ is satisfied with $k+1$ in place of $k$.

Define $ d_m^{k+1}:=c_m^k$.
For $n=1,...,2^{j-k-2}$, and $m=(n-1)2^{k+1}+1,...,n2^{k+1}$, denote:
$$z^{k+1}_m:=\left\{\begin{array}{ll}
w_m^k&\quad \mbox{if }\quad m\leq (2n-1)2^k\\\noalign{\medskip}
w_m^kx_{k+1}&\quad \mbox{if }\quad m> (2n-1)2^k.
\end{array}\right.$$
Then, $(iii)$ with $k+1$ in place of $k$ follows by Corollary \ref{CorBfromA}.
\end{proof}

The following lemma is needed in the proof of Proposition \ref{PropDownUp}.

\begin{lemma}\label{LemmaInductionEpsilon}
{\itshape The notation of step 2 in the proof of Proposition
\ref{PropDownUp} is in order. In particular, integers $k$ and $n$ are fixed
with $0\leq k\leq j-1$ and $ 1\leq n\leq 2^{j-k-1}$
and will be dropped from the notation. 
The simplified notation is 
$C(s)=B(s)-\delta P$, where $\delta=\delta_n^k$ is the coefficient of $P:=P^k_{-1}$ 
in item (iii) of Proposition \ref{PropUpDown}. 
Then, there is an algorithm to construct functions 
$ z_{m,\varepsilon}\in T_k$
and integers $d_{m,\varepsilon}$
parametrized by integers $m$ and $\varepsilon$ with 
$  0\leq \varepsilon\leq \delta$ and $(n-1)2^k+1\leq m\leq n2^k$
in such a way that for any $\varepsilon=0,..,\delta$, and for any $s\in\bb{Z}$
the family parametrized
by $m$ and $l$:
\begin{eqnarray}\label{FormBasisL(B-eP)}
 x_0^lz_{m,\varepsilon}\qquad\qquad (n-1)2^k+1\leq m\leq n2^k\qquad
0\leq l\leq
{\textstyle \left \lfloor \frac{s-b_m}{2^{j}}\right \rfloor}
-d_{m,\varepsilon}
\end{eqnarray}
is a basis of $L(B(s)-\varepsilon P)$.
}
\end{lemma}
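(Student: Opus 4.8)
The plan is to induct on $\varepsilon$, removing one copy of the place $P = P^k_{-1}$ at each step. For the base case $\varepsilon = 0$, note that $P$ does not occur in the support of $B(s) = B^k_n(s)$: by item $(ii,b)$ of Proposition~\ref{PropUpDown} the divisor $B(s)$ is supported on $P^k_\infty$ and the $D^k_m$ with $-2 \le m \le k-1$, and since $P^k_{-1}$ lies over $P^m_1$ (not over $P^m_{-1}$) for every $m < k$ by Lemma~\ref{LemmaRamification}(b), it belongs to none of them. Hence $L(B(s) - 0\cdot P) = L(B^k_n(s))$, and a basis of the required shape is given directly by item $(iii)$ of Proposition~\ref{PropDownUp}; I set $z_{m,0} := z^k_m$ and $d_{m,0} := d^k_m$. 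Writing $V_\varepsilon := L(B(s) - \varepsilon P)$, the fact that $P$ is a rational place (Lemma~\ref{LemmaRamification}) absent from $B(s)$ means membership in $V_\varepsilon$ is the single local condition $v_P(f) \ge \varepsilon$; thus $V_{\varepsilon+1} \subseteq V_\varepsilon$ with codimension at most one.

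The engine of the inductive step is the power series expansion at $P$ in the local parameter $t = 1 - x_0$ furnished by Remark~\ref{RemPower}. It produces the $K$-linear functional $\lambda_\varepsilon \colon V_\varepsilon \to K$ sending $f$ to its coefficient of $t^\varepsilon$, with $\ker \lambda_\varepsilon = V_{\varepsilon+1}$. The decisive point is that $x_0(P) = 1$, so $x_0 = 1 - t$ is a unit at $P$ with residue $1$; consequently $v_P(x_0^l z_{m,\varepsilon})$ and $\lambda_\varepsilon(x_0^l z_{m,\varepsilon}) =: c_m$ are both independent of $l$, i.e. $\lambda_\varepsilon$ is constant along each column of the inductive basis, and $c_m \ne 0$ precisely when $v_P(z_{m,\varepsilon}) = \varepsilon$. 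If all $c_m = 0$ then $\lambda_\varepsilon \equiv 0$, so $V_{\varepsilon+1} = V_\varepsilon$ and I keep $z_{m,\varepsilon+1} := z_{m,\varepsilon}$, $d_{m,\varepsilon+1} := d_{m,\varepsilon}$.

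Otherwise $\lambda_\varepsilon$ is onto $K$, the codimension is exactly one, and I clear leading coefficients by Gaussian elimination at $P$, uniformly in $s$. Rewriting the top index of the $m$-column as $\lfloor (s - N_m)/2^j\rfloor$ with $N_m := 2^j d_{m,\varepsilon} + b_m$, and using that the $N_m$ are pairwise distinct because the $b_m$ are distinct modulo $2^j$ (item $(i)$), I pick the pivot $m_0$ to be the index with $c_{m_0} \ne 0$ and $N_{m_0}$ minimal; then $\lfloor (s - N_{m_0})/2^j\rfloor \ge \lfloor (s - N_m)/2^j\rfloor$ for all $s$ and all $m$ with $c_m \ne 0$, so the pivot column is at least as long as every other eliminated column, for all $s$ at once. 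I then define $z_{m,\varepsilon+1} := z_{m,\varepsilon} - (c_m/c_{m_0}) z_{m_0,\varepsilon}$ with $d_{m,\varepsilon+1} := d_{m,\varepsilon}$ when $m \ne m_0$ and $c_m \ne 0$; $z_{m,\varepsilon+1} := z_{m,\varepsilon}$, $d_{m,\varepsilon+1} := d_{m,\varepsilon}$ when $c_m = 0$; and $z_{m_0,\varepsilon+1} := (x_0 - 1) z_{m_0,\varepsilon}$, $d_{m_0,\varepsilon+1} := d_{m_0,\varepsilon} + 1$ for the pivot. Each new base function now has $v_P \ge \varepsilon + 1$, so the whole family lies in $V_{\varepsilon+1}$ and has the prescribed form, the pivot-column shift raising the valuation by one while advancing $d_{m_0,\varepsilon}$ by one and preserving $b_{m_0}$.

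To see it is a basis, I adjoin the single dropped function $z_{m_0,\varepsilon}$: the pivot-column differences $x_0^{l+1} z_{m_0,\varepsilon} - x_0^l z_{m_0,\varepsilon}$ together with $z_{m_0,\varepsilon}$ telescope back to the old $m_0$-column, after which the cross-column subtractions recover the remaining columns, so the enlarged family is the old basis of $V_\varepsilon$ and the new family is linearly independent. Having $\dim V_\varepsilon - 1 = \dim V_{\varepsilon+1}$ elements inside $V_{\varepsilon+1}$, it is the desired basis. The construction is algorithmic since the $c_m$, the pivot, and the scalars $c_m/c_{m_0}$ are read off the finite expansions of Remark~\ref{RemPower}. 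The crux, and the main obstacle, is the uniformity in $s$: because the functions $z_{m,\varepsilon+1}$ and integers $d_{m,\varepsilon+1}$ must be chosen once for all $s$, the elimination cannot be carried out naively, and it is exactly the minimal-$N_m$ pivot (guaranteeing column-length comparisons valid for every $s$) together with the $(x_0-1)$-shift of the pivot column that makes a single uniform choice work.
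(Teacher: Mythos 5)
Your construction coincides with the paper's own: your pivot rule (nonzero leading coefficient and minimal $N_m=2^jd_{m,\varepsilon}+b_m$) is exactly the paper's choice of $\sigma(I)$, since ordering the pairs $(d_{m,\varepsilon},b_m)$ lexicographically as in Lemma \ref{LemmaSigma} is the same as ordering the $N_m$ numerically; your $(x_0-1)$-shift of the pivot column with $d_{m_0,\varepsilon}$ incremented matches the paper's $z_{\sigma(I),\varepsilon}(1-x_0)$ and $d_{\sigma(I),\varepsilon}+1$ up to sign; and your telescoping argument for linear independence is a slicker packaging of the direct coefficient computation in Lemma \ref{LemBasisD-P}. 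Replacing the paper's Riemann--Roch argument (which shows that some $m$ with $v_P(z_{m,\varepsilon})=\varepsilon$ always exists) by an ``all $c_m=0$'' branch is harmless, and your observation that $x_0=1-t$ is a unit with residue $1$ at $P$, so that $c_m$ does not depend on $l$, is the same mechanism the paper uses via Remark \ref{RemOrder}.

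There is, however, one step that fails as written. In your ``otherwise'' branch you assert, for every $s$, that ``the codimension is exactly one'' and you conclude with the count $\dim V_\varepsilon-1=\dim V_{\varepsilon+1}$. Both statements are false for those $s$ for which every column with $c_m\neq 0$ --- in particular the pivot column --- is empty, i.e.\ $l_{m_0}(s)<0$: for such $s$ all contributing basis elements of $V_\varepsilon$ have vanishing leading coefficient, so $V_{\varepsilon+1}=V_\varepsilon$, no function is dropped, and the enlarged family in your telescoping argument is not the old basis (indeed $z_{m_0,\varepsilon}$ need not even lie in $V_\varepsilon$ for such $s$, so $\lambda_\varepsilon$ is not onto). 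This is precisely the case $L(D-P)=L(D)$ that the paper isolates as the first case of Lemma \ref{LemBasisD-P}. The repair is immediate with your own tools: for such $s$ your new family coincides with the old contributing family (all modified columns are empty, by the minimality of $N_{m_0}$, and the untouched columns have $c_m=0$), hence it is a basis of $V_{\varepsilon+1}=V_\varepsilon$; but the case must be stated, since validity for all $s$ simultaneously is the whole content of the lemma. Two smaller points you leave implicit and should record: the fact that $v_P(z_{m,\varepsilon})\geq\varepsilon$ for every $m$ (needed for your $c_m$ to be defined at all) follows only by invoking the inductive hypothesis at large $s$, which is the paper's fact (i); and the expansions of Remark \ref{RemPower} at $P=P^k_{-1}$ are available because an inductive step occurs only when $\delta>0$, which by item (v) of Proposition \ref{PropUpDown} forces $k\geq 1$.
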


\begin{proof}
The proof is by induction on $\varepsilon$.
For $\varepsilon=0$, define $ d_{m,0}:= d^k_m$ and $ z_{m,0}:=z^k_m$,
where $d^k_m$ and $z^k_m$ are given in step 2 of the proof of Proposition
\ref{PropDownUp}.

Fix $\varepsilon$ with
$0\leq \varepsilon <\delta$, and suppose
to have already defined $d_{m,\varepsilon}$ and $z_{m,\varepsilon}$
giving a basis of $L(B(s)-\varepsilon P)$ for any
$s$. We will define integers
$d_{m,\varepsilon+1}$ and elements $z_{m,\varepsilon+1}$ that give a basis
of $L(B(s)-(\varepsilon+1) P)$ for any $s$.

Define, for any $m$ and $s$, 
$ l_{m}(s):={\textstyle \left \lfloor \frac{s-b_m}{2^{j}}\right \rfloor}
-d_{m,\varepsilon}$.
By Lemma \ref{LemmaSigma} there is a unique bijection:
$$\sigma: \{i\,|\, 1\leq i\leq 2^k\}\map
\{m\,|\, (n-1)2^k+1\leq m\leq n2^k\}$$
s.t. for any $s\in\bb{Z}$ we have 
$l_{\sigma(1)}(s)\leq l_{\sigma(2)}(s)\leq\, ...\, \leq l_{\sigma(2^k)}(s)$.
Next, we prove the following two facts that will be needed in the sequel:

\noindent
(i) For any $m=(n-1)2^k+1,...,n2^k$ we have $v_P(z_{m,\varepsilon})\geq
\varepsilon$.

\noindent
(ii) There exists $m$ s.t. $v_P(z_{m,\varepsilon})=
\varepsilon$.

Indeed, we will use the fact that formula (\ref{FormBasisL(B-eP)}) is valid for any
$s\in\bb{Z}$ (for the given $\varepsilon$).
In particular, for $s$ big enough we have $l_m(s)\geq 0$
for any $m$. It follows, taking $l=0$ for any $m$, that
$z_{m,\varepsilon}\in L(B(s)-\varepsilon P)$; hence,
$v_P(z_{m,\varepsilon})\geq
\varepsilon$, and (i) is proved.
Since, by Proposition \ref{PropUpDown} $(ii,b)$,
$\mr{deg}\,B(s)\ra +\infty$ as $s\ra +\infty$, then, applying the Riemann-Roch Theorem, 
it follows that
for $s$ big enough we have $L((B(s)-\varepsilon P) -P)\neq
L(B(s)-\varepsilon P)$; thus, it cannot happen that
$v_P(z_{m,\varepsilon})>
\varepsilon$ for any
$m$ (use that $v_P(x_0)\geq 0$, and Remark \ref{RemOrder}),
and item (ii) is proved.\\

The function $t:=1-x_0$ is a local parameter around $P$: 
since  $\delta$ is strictly positive
 then
$k>0$, and we can apply Remark \ref{RemPower}. 

For $z\in T_k$ with $v_P(z)\geq \varepsilon$,
we denote by $z(P)\in K$
 the  order $\varepsilon$ coefficient
in the power series expansion of $z$ around $P$ with respect to $t$: 
$z=z(P)t^\varepsilon+O(t^{\varepsilon+1})$.
Notice that $z(P)=0$ if and only if $v_P(z)>\varepsilon$.
Denote:
$$ I:=\mr{max}\{i\,|\, 1\leq i\leq 2^k\mbox{ and }
v_P(z_{\sigma(i),\varepsilon})=
\varepsilon\}.$$
Notice that $z_{\sigma(I),\varepsilon}(P)\neq 0$.
The constructibility of  $z_{m,\varepsilon}(P)$
follows by Remark \ref{RemPower} and by the fact that the functions $
z_{m,\varepsilon}$ are expressed in terms of the generators $x_i$: in fact, they
are constructed
starting from the explicit functions $z_m^0$ of step 1 of
the proof of Proposition \ref{PropDownUp}.

We are now in the position to define $z_{m,\varepsilon+1}$ and
$d_{m,\varepsilon+1}$. For any $m$ with
$(n-1)2^k+1\leq m\leq n2^k$ define:
$$ z_{m,\varepsilon+1}:=\left\{
\begin{array}{ll}
z_{m,\varepsilon}-\frac{z_{m,\varepsilon}(P)}{z_{\sigma(I),\varepsilon}(P)}
z_{\sigma(I),\varepsilon}
&\quad\mbox{if }\quad m\neq \sigma(I)
\\\noalign{\medskip}
z_{\sigma(I),\varepsilon}(1-x_0)
&\quad\mbox{if }\quad m= \sigma(I)
\end{array}\right.$$
and:
$$ d_{m,\varepsilon+1}:=\left\{
\begin{array}{ll}
d_{m,\varepsilon}
&\quad\mbox{if }\quad m\neq \sigma(I)
\\\noalign{\medskip}
d_{\sigma(I),\varepsilon}+1
&\quad\mbox{if }\quad m= \sigma(I).
\end{array}\right.$$

We are left to prove that for any $s\in\bb{Z}$ the $z_{m,\varepsilon+1}$'s
and the  $d_{m,\varepsilon+1}$'s give a basis of $L(B(s)-(\varepsilon+1))$
as in the statement. We isolate the proof of this fact in Lemma
\ref{LemBasisD-P}.

\end{proof}

The following lemmas and remark are
 needed in the proof of Lemma \ref{LemmaInductionEpsilon}.

\begin{lemma} \label{LemmaSigma}
With the notation of  the proof of
Lemma \ref{LemmaInductionEpsilon}, there exists a unique
bijection:
$$\sigma: \{i\,|\, 1\leq i\leq 2^k\}\map
\{m\,|\, (n-1)2^k+1\leq m\leq n2^k\}$$
s.t., for any $s\in\bb{Z}$, we have 
$l_{\sigma(1)}(s)\leq l_{\sigma(2)}(s)\leq\, ...\, \leq l_{\sigma(2^k)}(s)$.

\end{lemma}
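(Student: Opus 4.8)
The plan is to realize the required bijection as the unique linear order induced by the pointwise comparison of the step functions $l_m(s)$, and to reduce the whole statement to showing that this comparison is a total order on the indices of the block. Concretely, I would define a relation $\preceq$ on the set $\{m : (n-1)2^k+1 \le m \le n2^k\}$ by declaring $m \preceq m'$ precisely when $l_m(s)\le l_{m'}(s)$ for every $s\in\bb{Z}$. The bijection $\sigma$ we seek is then forced to be the unique order-isomorphism from $\{1,\dots,2^k\}$, with its usual order, onto the block ordered by $\preceq$; so it remains only to prove that $\preceq$ is a total order, since reflexivity and transitivity are immediate from the pointwise definition.

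The heart of the matter is a pointwise analysis of the difference $l_m(s)-l_{m'}(s)$ for two indices $m\neq m'$ in the block. Writing $N=2^j$, recall that $l_m(s)=\lfloor (s-b_m)/N\rfloor - d_{m,\varepsilon}$, so that
\[
l_m(s)-l_{m'}(s)=\Big\lfloor\tfrac{s-b_m}{N}\Big\rfloor-\Big\lfloor\tfrac{s-b_{m'}}{N}\Big\rfloor-(d_{m,\varepsilon}-d_{m',\varepsilon}).
\]
By item $(i)$ of Proposition \ref{PropUpDown} the coefficients $b_m$ take distinct odd residues modulo $N$, so in particular $b_m\neq b_{m'}$ and $0\le b_m,b_{m'}<N$. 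I would then observe that the floor difference is periodic in $s$ of period $N$ and, since $|b_m-b_{m'}|<N$, takes exactly the two consecutive integer values $0,1$ (when $b_m<b_{m'}$) or $-1,0$ (when $b_m>b_{m'}$) as $s$ runs over a full period, both values being genuinely attained. Consequently $l_m(s)-l_{m'}(s)$ ranges over two consecutive integers $v,v+1$.

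The key elementary remark is that two consecutive integers can never strictly bracket $0$: for integers $v,v+1$ one has either $v\ge 0$, in which case both are nonnegative, or $v+1\le 0$, in which case both are nonpositive. Hence $l_m(s)-l_{m'}(s)$ has constant sign as $s$ varies, which is exactly the statement that $m$ and $m'$ are comparable under $\preceq$; moreover this difference is not identically zero, because its two attained values differ, so comparability holds in exactly one direction. This yields totality and antisymmetry of $\preceq$ simultaneously, whence $\preceq$ is a total order and the sorting bijection $\sigma$ exists and is unique.

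I expect the main obstacle to be the second paragraph: pinning down that the floor difference runs over \emph{precisely} two consecutive integers, with both attained (rather than merely bounding it), since it is this exactness that, combined with the consecutive-integers remark, forces a single sign for every $s$ at once. The hypotheses $0\le b_m<2^j$ and the distinctness of the $b_m$ on the block, both furnished by Proposition \ref{PropUpDown}$(i)$, are exactly what make this step go through.
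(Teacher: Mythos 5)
Your proof is correct, but it is organized differently from the paper's. The paper constructs $\sigma$ explicitly: it sorts the indices by the lexicographic order on the pairs $(d_{m,\varepsilon},b_m)$, arranged decreasingly, using the distinctness of the $b_m$ to get uniqueness of the sorted arrangement, and then reduces the lemma to the single claim that $(d,b)\geq(d',b')$ lexicographically (with $0\leq b,b'<2^j$) implies $\lfloor (s-b)/2^j\rfloor - d \leq \lfloor (s-b')/2^j\rfloor - d'$ for all $s$ --- a claim it explicitly leaves to the reader. You instead define the pointwise preorder $\preceq$ and prove directly that it is a total order, via the observation that $\lfloor (s-b_m)/2^j\rfloor - \lfloor (s-b_{m'})/2^j\rfloor$ is $2^j$-periodic in $s$ and takes exactly two consecutive integer values, both attained; since two consecutive integers cannot strictly bracket $0$, the difference $l_m-l_{m'}$ never changes sign, and since its two attained values differ it is never identically zero, giving totality and antisymmetry at once; abstract sorting of a finite linearly ordered set then yields existence and uniqueness of $\sigma$. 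In effect your two-consecutive-values analysis is precisely the ``easy proof left to the reader'' in the paper: pushed one step further it recovers the paper's sorting criterion ($l_m\leq l_{m'}$ pointwise if and only if $(d_{m,\varepsilon},b_m)\geq (d_{m',\varepsilon},b_{m'})$ lexicographically), so both proofs rest on the same arithmetic fact about floors. What the paper's formulation buys is an explicit computable sorting key, which matters because the surrounding results are stated as algorithms; your version buys a self-contained argument, and comparability under $\preceq$ remains effectively decidable from the pairs $(d_{m,\varepsilon},b_m)$, so nothing algorithmic is lost. One small slip: the range $0\leq b_m<2^j$ and the distinctness of the $b_m$ come from item $(i)$ of Proposition \ref{PropDownUp} (the $b_m$ take each odd residue modulo $2^j$ exactly once), not from Proposition \ref{PropUpDown}, whose coefficients $b^k_n$ are different objects.
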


\begin{proof}
We put a lexicographical order on $\bb{Z}\times \bb{Z}$ by declaring:
$ (d,b)\leq (d',b')$ if $d< d'$ or
$(d=d'\mbox{ and } b\leq b').$
Since as $m$ varies the $b_m$'s
are all distinct, then there exists a unique bijection
$\sigma$ as in the statement s.t.:
$ (d_{\sigma(1),\varepsilon}, b_{\sigma(1)})>
(d_{\sigma(2),\varepsilon}, b_{\sigma(2)})>\,
...\, >
(d_{\sigma(2^k),\varepsilon}, b_{\sigma(2^k)})$.
We have to show that such a $\sigma$ has the desired
property. For this it's enough to show that if $(d,b),(d',b')\in\bb{Z}\times\bb{Z}$
with $0\leq b,b'<2^j$ and $(b,d)\geq (b',d')$ then, for all
$s\in\bb{Z}$:
\begin{eqnarray*}\label{FormOrderlms}
 {\textstyle \Big \lfloor \frac{s-b}{2^{j}}\Big \rfloor}
-d\leq
{\textstyle \Big\lfloor \frac{s-{b}'}{2^{j}}\Big \rfloor}
-d'.
\end{eqnarray*}
We leave the easy proof to the reader.

\end{proof}

\begin{remark} \label{RemOrder}
With the notation  of the proof of
Lemma \ref{LemmaInductionEpsilon}, for any $s\in\bb{Z}$ and
for any
$z\in L(B(s)-\varepsilon P)$ we have that $v_P(z)\geq \varepsilon$ and:
$$ z\in L(B(s)-(\varepsilon+1)P) \quad\iff\quad v_P(z)>\varepsilon
\quad \iff\quad z(P)=0.$$

\end{remark}

\begin{lemma}\label{LemBasisD-P}
With the notation  of the proof of
Lemma \ref{LemmaInductionEpsilon},
for any $s\in\bb{Z}$
the family parametrized
by $m$ and $l$:
$$
 x_0^lz_{m,\varepsilon+1}\qquad\qquad (n-1)2^k+1\leq m\leq n2^k\qquad
0\leq l\leq
{\textstyle \left \lfloor \frac{s-b_m}{2^{j}}\right \rfloor}
-d_{m,\varepsilon+1}
$$
is a basis of $L(B(s)-(\varepsilon+1)P)$.

\end{lemma}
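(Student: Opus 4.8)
The plan is to show the proposed family is a basis of $L(B(s)-(\varepsilon+1)P)$ by establishing three things, all uniformly in $s\in\bb{Z}$: that every member lies in $L(B(s)-(\varepsilon+1)P)$; that the family is linearly independent; and that its cardinality equals $\dim L(B(s)-(\varepsilon+1)P)$. Throughout I would write $l_m(s):=\lfloor (s-b_m)/2^j\rfloor-d_{m,\varepsilon}$ and use freely the induction hypothesis that $x_0^lz_{m,\varepsilon}$ with $0\le l\le l_m(s)$ is a basis of $L(B(s)-\varepsilon P)$, together with the uniform ordering $\sigma$ of Lemma \ref{LemmaSigma} and the index $I$. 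The essential preliminary observation is that $\sigma(I)$ realizes the largest $l$-bound among the indices that matter: since $l_{\sigma(1)}(s)\le\cdots\le l_{\sigma(2^k)}(s)$ for all $s$ and $I$ is the largest $i$ with $z_{\sigma(i),\varepsilon}(P)\neq 0$, one gets $l_{\sigma(I)}(s)=\max\{\,l_m(s)\mid z_{m,\varepsilon}(P)\neq 0\,\}$.

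For containment I would treat three cases. If $z_{m,\varepsilon}(P)=0$ then $z_{m,\varepsilon+1}=z_{m,\varepsilon}$, the bound is unchanged, and $v_P(z_{m,\varepsilon+1})\ge\varepsilon+1$ already; so these members stay in $L(B(s)-\varepsilon P)$ and, having $v_P\ge \varepsilon+1$, in $L(B(s)-(\varepsilon+1)P)$ by Remark \ref{RemOrder}. If $z_{m,\varepsilon}(P)\neq 0$ and $m\neq\sigma(I)$, then $z_{m,\varepsilon+1}=z_{m,\varepsilon}-c_mz_{\sigma(I),\varepsilon}$ with $c_m=z_{m,\varepsilon}(P)/z_{\sigma(I),\varepsilon}(P)$; since $l_m(s)\le l_{\sigma(I)}(s)$ by the observation above, for $0\le l\le l_m(s)$ both $x_0^lz_{m,\varepsilon}$ and $x_0^lz_{\sigma(I),\varepsilon}$ lie in $L(B(s)-\varepsilon P)$, and the order-$\varepsilon$ coefficient is cancelled by the choice of $c_m$, giving $v_P\ge\varepsilon+1$. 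Finally, for $m=\sigma(I)$ one has $z_{\sigma(I),\varepsilon+1}=(1-x_0)z_{\sigma(I),\varepsilon}$ and the bound drops to $l_{\sigma(I)}(s)-1$; writing $x_0^l(1-x_0)=x_0^l-x_0^{l+1}$ and noting $l+1\le l_{\sigma(I)}(s)$, this is a difference of two elements of $L(B(s)-\varepsilon P)$, now with $v_P=\varepsilon+1$. In every case the crucial point making the pole order at $P^k_\infty$ behave is the uniform ordering, which lets me bound $l$ against $l_{\sigma(I)}(s)$ simultaneously for all $s$.

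For the cardinality I would use the linear functional $z\mapsto z(P)$ (the order-$\varepsilon$ coefficient), whose kernel in $L(B(s)-\varepsilon P)$ is $L(B(s)-(\varepsilon+1)P)$ by Remark \ref{RemOrder}, so the codimension is $0$ or $1$. Because $x_0(P)=1$, a direct expansion in $t=1-x_0$ gives $(x_0^lz_{m,\varepsilon})(P)=z_{m,\varepsilon}(P)$, so the functional is nonzero on the $\varepsilon$-level basis exactly when some $m$ has $z_{m,\varepsilon}(P)\neq 0$ and $l_m(s)\ge 0$, which by the preliminary observation is equivalent to $l_{\sigma(I)}(s)\ge 0$. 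Hence $\dim L(B(s)-(\varepsilon+1)P)$ drops from $\dim L(B(s)-\varepsilon P)$ by exactly $1$ precisely when $l_{\sigma(I)}(s)\ge 0$, and this matches the change in the family's cardinality, which differs from the $\varepsilon$-level count only by the single unit lost in the $m=\sigma(I)$ bound. Linear independence is then routine: modulo the $\varepsilon$-level basis the members with $m\neq\sigma(I)$ carry distinct leading terms $x_0^lz_{m,\varepsilon}$, while the members with $m=\sigma(I)$ are the images of $x_0^lz_{\sigma(I),\varepsilon}$ under multiplication by $(1-x_0)\neq 0$, a triangular and hence invertible operation on that strand. I expect the main obstacle to be the containment step for $m=\sigma(I)$ and the bookkeeping of pole orders at $P^k_\infty$: everything hinges on $\sigma(I)$ having the maximal $l$-bound uniformly in $s$, so the argument lives or dies on Lemma \ref{LemmaSigma} and the maximality built into the definition of $I$.
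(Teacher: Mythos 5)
Your proposal is correct and follows essentially the same route as the paper's proof: the same three containment cases resting on Remark \ref{RemOrder} and the maximality of $I$ from Lemma \ref{LemmaSigma}, and the same triangular change-of-basis computation for linear independence. The only deviation is organizational: where the paper splits into the cases $L(D-P)=L(D)$ and $L(D-P)\varsubsetneq L(D)$, you count dimensions uniformly via the order-$\varepsilon$ coefficient functional $z\mapsto z(P)$, whose kernel is $L(D-P)$ and which is nonzero on the old basis exactly when $l_{\sigma(I)}(s)\geq 0$ --- a slightly cleaner packaging of the same argument.
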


\begin{proof}
Fix $s\in\bb{Z}$. We simplify the notation by writing, for any $i$ with
$1\leq i\leq 2^k$:
$$z_i=z_{\sigma(i),\varepsilon},\quad
d_i=d_{\sigma(i),\varepsilon},\quad\bar{z}_i=z_{\sigma(i),\varepsilon+1},\quad
\bar{d}_i=d_{\sigma(i),\varepsilon+1}$$
and:
$$ l_i=l_{\sigma(i)}(s),\qquad \bar{l}_i=\bar{l}_{\sigma(i)}(s)\qquad\qquad
D=B(s)-\varepsilon P.$$
With the new notation we have that $l_1\leq l_2\leq\,...\,\leq l_{2^k}$, and:
$$\bar{z}_i=\left\{
\begin{array}{ll}
z_i-\frac{z_i(P)}{z_I(P)}z_I&\quad\mbox{if}\quad i\neq I\\\noalign{\medskip}
z_I(1-x_0)&\quad\mbox{if}\quad i=I
\end{array}\right.\qquad\qquad \bar{l}_i=\left\{
\begin{array}{ll}
l_i&\quad\mbox{if}\quad i\neq I\\\noalign{\medskip}
l_I-1&\quad\mbox{if}\quad i=I.
\end{array}\right.$$
Recall that for any $i$ we have $v_P(z_i)\geq \varepsilon$, and $I$ is the biggest
index with $v_P(z_I)=\varepsilon$.

What is left to prove is the following:
if the family
parametrized by $i$ and $l$: $ x_0^lz_i$ with $1\leq i\leq 2^k$ and $0\leq l\leq l_i$
is a basis of $L(D)$, then the family
$ x_0^l\bar{z}_i$ with $ 0\leq  i\leq 2^k$ and $ 0\leq l\leq \bar{l}_i$
is a basis of $L(D-P)$.\\

If $L(D-P)=L(D)$, we start by observing that if $i$ is such that $l_i\geq 0$
then  $v_P(z_i)>\varepsilon $ and $z_i(P)=0$: this is a consequence
of the fact that $z_i\in L(D)=L(D-P)$ and Remark \ref{RemOrder}.
Notice that such values of $i$ are
exactly the ones that contribute to the basis of $L(D)$, and that,
in particular, $l_I<0$ since $v_P(z_I)=\varepsilon $. It follows that,
for $i$ with $l_i\geq 0$, we have $\bar{z}_i=z_i$ and $\bar{l}_i=l_i$.
We conclude that the family $x_0^l\bar{z_i}$ is exactly the
same as the family $x_0^lz_i$, and we are done.
\\

If $L(D-P)\varsubsetneq L(D)$, we start by observing that the codimension is $1$ and
there exists $i$ with $l_i\geq 0$ and $v_P(z_i)=\varepsilon$. Indeed,
if for any $i$ with $l_i\geq 0$ we had $v_P(z_i)>\varepsilon$,
then, by Remark \ref{RemOrder}, we would have that any element $x_0^lz_i$
that contributes to the basis of $L(D)$ would belong to $L(D-P)$, which
would imply $L(D-P)=L(D)$, contrary to the assumption. By the maximality
of $I$ we have that $l_I\geq 0$. In particular, the family $x_0^l\bar{z}_i$
has one element less than the family $x_0^lz_i$. We are left to prove that
all the elements of $x^l_0\bar{z}_i$ belong to $L(D-P)$ and that they
are linearly independent over $K$.\\

Let's prove that for any $i$ and for any $l$ with $0\leq l\leq \bar{l}_i$ then
$x^l_0\bar{z}_i\in L(D-P)$.

First, consider the case $i=I$. Then $x_0^l\bar{z}_I=x_0^lz_I-
x_0^{l+1}z_I$. Since $\bar{l}_I=l_I-1$ and $0\leq l\leq \bar{l}_I$ then
both $x_0^lz_I$ and $x_0^{l+1}z_I$ belongs to $L(D)$, hence
$x_0^l\bar{z}_I$ belongs as well. Since $v_P(x_0^l\bar{z}_I)=
v_P(x_0^lz_I(1-x_0))=v_P(z_I)+v_P(1-x_0)=\varepsilon+1>\varepsilon$,
then we can apply Remark \ref{RemOrder} to conclude
$x_0^l\bar{z}_I\in L(D-P)$.

Second, consider the case in which $i\neq I$ and $v_P(z_i)>\varepsilon$.
Then $z_i(P)=0$.  It follows that $x_0^l\bar{z}_i=x_0^lz_i$. Now,
$x_0^lz_i\in L(D-P)$ by
$x_0^lz_i\in L(D)$ since $0\leq l\leq
\bar{l}_i=l_i$, and by application of Remark \ref{RemOrder}.

Third, consider the case $i\neq I$ and $v_P(z_i)=\varepsilon$. Then
$i<I$ and $\bar{l}_i=l_i\leq l_I$. It follows that for $0\leq l\leq \bar{l}_i$
we have $x_0^l\bar{z}_i=x_0^lz_i-(z_i(P)/z_I(P))x_0^lz_I \in L(D)$
since both $x_0^lz_i$ and $x_0^lz_I$ belongs to $L(D)$. Moreover,
it's clear that $\bar{z}_i(P)=0$ so that
$v_P(x_0^l\bar{z}_i)=v_P(\bar{z}_i)>\varepsilon$, and we can apply
Remark \ref{RemOrder} to conclude $x_0^l\bar{z}_i\in L(D-P)$.
\\

Now, let  
$\alpha_{i,l}\in K$ s.t. $ \sum_{i=1}^{2^k}\sum_{l=0}^{\bar{l}_i}\alpha_{i,l}x_0^l\bar{z}_i=0$.
Write  the set $\cl{I}:=\{i\,| \, 1\leq i\leq 2^k\}$ as a disjoint union:
$$ \cl{I}=\{I\}\cup \cl{I}_0\cup\cl{I}_1 $$
where $\cl{I}_0:=\{i\in\cl{I}\,|\, i\neq I\quad\mbox{and}\quad v_P(z_i)
=\varepsilon\}$ and $\cl{I}_1:=\{i\in\cl{I}\,|\, v_P(z_i)
>\varepsilon\}$, and split the summation:
$$ \sum_{i=1}^{2^k}\sum_{l=0}^{\bar{l}_i}\alpha_{i,l}x_0^l\bar{z}_i=
\sum_{i\in\cl{I}_0}\sum_{l=0}^{l_i}\alpha_{i,l}x_0^l
\left(z_i-{\scriptstyle\frac{z_i(P)}{z_I(P)}}z_I\right)+
\sum_{i\in\cl{I}_1}\sum_{l=0}^{l_i}\alpha_{i,l}x_0^lz_i+
\sum_{l=0}^{l_I-1}\alpha_{I,l}(x_0^lz_I-x_0^{l+1}z_I)
$$
where we used the definitions of
$\bar{z}_i$
and of $\bar{l}_i$.
(We are using that for $i\in\cl{I}_1$ we have
$\bar{z}_i=z_i$.) For any $i\in\cl{I}$, all the $x_0^lz_i$ appearing in the
sum above are
with $0\leq l\leq l_i$. (For the $x_0^lz_I$ in the first summation in the
right-hand side,
we are using that for $i\in\cl{I}_0$ we have $l_i\leq l_I$, by
the maximality of $I$.) Since such elements $x_0^lz_i$ are linearly independent,
then it follows that for $i\neq I$ we have $\alpha_{i,l}=0$.
It follows that $\sum_{l=0}^{l_I}\left(\alpha_{I,l}-\alpha_{I,l-1}\right)x_0^lz_I=0$,
where in case $l_I\geq 0$ we put $\alpha_{I,-1}:=\alpha_{I,l_I}:=0$.
(In case $l_I<0$ there are no coefficients $\alpha_{I,l}$ at all,
and we are done.) By the linear independence of the $x_0^lz_I$'s, it
follows that $\alpha_{I,l-1}=\alpha_{I,l}$ for any $l$. Then,
by induction on $l$, all the coefficient $\alpha_{I,l}$ vanish, and
the proof of the linear independence is complete.
\\
\end{proof}


\subsection{Weierstrass semigroups at $P_\infty^j$ up to level 8}\label{SecSemigroup}

As an application of Theorems \ref{ThBasisLPinf} and \ref{ThWeierLPinf}
we implemented an algorithm in Scilab (www.scilab.org) to compute the Weierstrass
semigroups $H^j:= H(P_\infty^j)$ at $P^j_\infty$ for any $0\leq j\leq 8$.

We list the elements of the semigroup $H^j$ by making a list of the
non-gap intervals: $a_1$-$b_1$, $a_2$-$b_2$, ..., $a_{n}$-$b_n$, $a_{n+1}$-$\infty$.
This means that:
$$H^j=\bigcup_{k=1}^n [a_k,b_k]\cup [a_{n+1},\infty).$$
If $a_k=b_k$ we write $a_k$-$b_k$ as $a_k$. In the following table $g_j$ denotes the genus
of $T_j/K$.\\

$$\begin{array}{|c|c|l|}\hline
j&g_j& H^j\\\hline
0& 0&0\mbox{-}\infty\\\hline
1&1&0;\, 2\mbox{-}\infty\\\hline
2&3&0;\, 3\mbox{-}4;\, 6\mbox{-}\infty\\\hline
3&9&0;\,6;\,8;\,11\mbox{-}12;\, 14\mbox{-}\infty\\\hline
4&21&0;\,12;\,15\mbox{-}16;\,22\mbox{-}24;\,27\mbox{-}32;\, 34\mbox{-}\infty\\\hline
5&49&
0;\,24;\,30\mbox{-}32;\,44;\,46\mbox{-}48;\,
53\mbox{-}56;\,58\mbox{-}64;\,
68\mbox{-}72;\, 74\mbox{-}\infty\\\hline
6&105&
0;\,48;\,60;\,62\mbox{-}64;\,88;\,92;\,94\mbox{-}96;\,
103;\,106\mbox{-}112;\\
&&
115\mbox{-}128;\,
135\mbox{-}136;\,138\mbox{-}144;\,147\mbox{-}\infty\\\hline
7&225&
0;\,96;\,120;\,124;\,126\mbox{-}128;\,176;\,
184;\,188;\,190\mbox{-}192;\,206;\\
&&
212\mbox{-}216;\,
218;\,220\mbox{-}224;\,230\mbox{-}232;\,234\mbox{-}240;\,
242\mbox{-}256;\\
&&
263;\,269\mbox{-}272;\,276\mbox{-}280;\,
282\mbox{-}288;\,291;\,293\mbox{-}\infty\\\hline
\end{array}$$

\noindent
For $j=8$ we have $g_8=465$ and the non-gap intervals are:
$$\begin{array}{|c|c|c|c|c|c|c|c|}
\hline
0&192&240&248&252&254\mbox{-}256&352&368\\
\hline
376&380&382\mbox{-}384&412&423\mbox{-}424&426&428&430\\
\hline
432&436&439\mbox{-}440&442&444\mbox{-}448&459\mbox{-}464&467\mbox{-}472&474\mbox{-}480\\
\hline
483\mbox{-}484&486\mbox{-}512&519&526\mbox{-}527&533&535&538\mbox{-}540&542\mbox{-}544\\
\hline
547&549&551\mbox{-}552&554\mbox{-}561&563\mbox{-}576&579&581\mbox{-}583&
585\mbox{-}\infty\\
\hline
\end{array}$$

\bibliographystyle{amsalpha}

\end{document}